\newtheorem{theorem}{Theorem}[section]
\newtheorem{lemma}{Lemma}[section]
\newtheorem{remark}{Remark}[section]
\newcommand{\se}{\setcounter{equation}{0}}
\newcommand{\cop}{ \partial_t^{\alpha}}
\newcommand{\norm}[1]{{\left\vert\kern-0.25ex\left\vert\kern-0.25ex\left\vert #1 
    \right\vert\kern-0.25ex\right\vert\kern-0.25ex\right\vert}}
\title{Numerical solution of a time-fractional nonlinear Rayleigh-Stokes problem}
\author{Mariam Al-Maskari\thanks{Email: m.maskari@student.squ.edu.om} \quad and\quad 
Samir Karaa\thanks{Email: skaraa@squ.edu.om.  This work is supported bu Sultan Qaboos University under grant  IG/SCI/MATH/20/04.} \\
\\
Department of Mathematics, Sultan Qaboos University\\
Al-Khod 123, Muscat, Oman}
\begin{document}
\date{}
\maketitle
\begin{abstract}
%We study a nonlinear Rayleigh-Stokes problem for a generalized second-grade fluid with a fractional %derivative model. 
We study a  semilinear fractional-in-time  Rayleigh-Stokes problem for a generalized second-grade fluid with a Lipschitz continuous nonlinear source term and  initial data $u_0\in\dot{H}^\nu(\Omega)$, $\nu\in[0,2]$.  We discuss  stability of solutions and provide regularity results. Two  spatially semidiscrete schemes are analyzed based on standard Galerkin and lumped mass finite element methods, respectively.   Further, a fully discrete scheme  is obtained by applying a convolution quadrature in time  generated by the backward Euler method, and optimal error estimates  are derived for  smooth and nonsmooth initial data. Finally,  numerical examples are provided  to illustrate the theoretical results.
\end{abstract}

{\small{\bf Key words.} semilinear fractional Rayleigh-Stokes equation,  lumped mass method,
convolution quadrature,  optimal error estimate,  nonsmooth initial data.
}

\medskip
{\small {\bf AMS subject classifications.} 65M60, 65M12, 65M15}

%%%%%%%%%%%%%%%%%%%%%%%%%%%%%%%%%%%%%%%%%%%%%%%%%%%%%%%%%%%%%%%%%%%%%%%%%%%%%%%%%%%%%%%%%%%%
\section{Introduction}
We consider  a  semilinear fractional-order Rayleigh-Stokes problem  for a generalized second-grade fluid. Let $\Omega\subset \mathbb{R}^d\, (d=1,2,3)$ be a bounded convex polygonal domain with its boundary $\partial \Omega$, and $T>0$. The  mathematical model is given by
\begin{subequations}\label{main}
\begin{alignat}{2}\label{a1}
& \partial_t  u(x,t) -(1+\gamma \partial_t^{\alpha})\Delta u(x,t)=f(u) &&\quad\mbox{ in }\Omega\times (0,T],
\\  \label{a2}
&u(x,t)= 0 &&\quad\mbox{ on }\partial\Omega\times (0,T],
\\   \label{a3}
&u(x,0)=u_0(x) &&\quad\mbox{ in }\Omega,
\end{alignat}
\end{subequations}
where $\gamma>0$ is a fixed constant, $u_0$ is a given initial data, $\partial_t=\partial /\partial t$ and 
$\partial_t^{\alpha}$ is the Riemann-Liouville  fractional derivative in time  \textcolor{black}{with} $\alpha\in(0,1)$ defined by
\begin{equation} \label{Ba}
\partial_t^{\alpha} \varphi(t)=\frac{d}{dt}\int_0^t\omega_{1-\alpha}(t-s)\varphi(s)\,ds\quad\text{with} \quad
\omega_{\alpha}(t):=\frac{t^{\alpha-1}}{\Gamma(\alpha)}.
\end{equation}
In \eqref{a1},  $f:\mathbb{R}\to\mathbb{R}$ is a smooth function satisfying the Lipschitz condition 
\begin{equation} \label{Lip}
|f(t)-f(s)|\leq L|t-s|\quad \forall t,s\in \mathbb{R},
\end{equation} 
for some constant $L>0$.

%The numerical approximation of  problem \eqref{main} has attracted the attentions of many %researchers. For the linear case, one 

The aim of this work is to study some aspects of the numerical solution of the semilinear problem \eqref{main}. The linear case has been considered by several authors.
%The linear case has been given a special attention in the literature. 
%, see \cite{EJLZ2016, stokes1, stokes2,stokes4,  Zaky,MK-2018}. 
For instance, in \cite{stokes1} and \cite{stokes2}, implicit and explicit finite difference schemes have been proposed. A Fourier analysis was employed to investigate stability and convergence. In \cite{23}, a numerical scheme was derived and analyzed by transforming the problem into an integral equation. In \cite{stokes4}, a numerical scheme was investigated  
using the reproducing kernel technique. 
In \cite{Zaky}, Zaky applied the Legendre-tau method to problem \eqref{main} and  discussed related  convergence rates.
 The convergence analysis in all these studies assumes that the exact solution  
is sufficiently regular, including at $t=0$, which is not practically the case. 
In \cite{EJLZ2016},   Jin et al.  investigated a piecewise linear finite element method (FEM) in space and a convolution quadrature in time, and  obtained optimal error estimates with respect to the solution smoothness, expressed through the initial data $u_0$. Most recently, a similar analysis was presented in \cite{MK-2018} for a time-fractional Oldroyd-B fluid problem.

The numerical approximation of nonlinear time-fractional models has recently attracted the attention of many researchers.  In particular, the time-fractional subdiffusion model
\begin{equation}\label{uu}
^C\partial_t^{\alpha} u(x,t) -\Delta u(x,t)=f(u)
\end{equation}
has \textcolor{black}{been} given a special attention. Here,  $^C\partial_t^{\alpha}$ denotes the Caputo fractional derivative in time of order $\alpha$.  
 In \cite{LWZ-2017}, for instance, a linearized $L^1$-Galerkin FEM was proposed for solving a nonlinear time-fractional Schr\"odinger equation. Based on a temporal-spatial error splitting argument and a new discrete fractional Gronwall-type inequality, optimal error estimates of the numerical schemes are obtained without 
restrictions on the time step size. In \cite{LLSWZ-2018},  $L^1$-type schemes have been analyzed for approximating the solution of \eqref{uu}, and related  error estimates have been derived.  The estimates are obtained under high regularity assumptions on the exact solution. 
 In \cite{JLZ-2018}, the numerical solution of \eqref{uu} was investigated under the assumption that the nonlinear function $f$ is globally Lipschitz continuous  and the  initial data $u_0\in H^2(\Omega)\cap H^1_0(\Omega)$. These results have been extended in \cite{MK-2019} to problems with nonsmooth initial data. Recently,  a numerical study with a more general condition on nonlinearity was presented in \cite{MK-2020}.

In this paper, we first investigate  a lumped mass FE semidiscrete scheme in space for solving \eqref{main}.   \textcolor{black}{ Compared with the standard piecewise linear FEM \cite{MK-2019,EJLZ2016},
 the lumped mass FEM has the advantage that 
when representing the discrete solution in the nodal basis functions, it produces  a diagonal mass matrix which enhances the computation procedure.} 
 Our aim  is to  derive optimal error estimates for solutions with smooth and nonsmooth initial data. The analysis will be based on a semi-group type approach.
The FE solution will serve as an intermediate solution to establish error estimates for
 the lumped mass FEM. This technique was used for instance in \cite{CLT-2012,CLT-2013} and 
 \cite{MK-2018-b}.
 %Our analysis is based on using Laplace transformation to represent the  solution in an integral form and using this representation to show its existence and uniqueness and  to find its regularity.  
Our second objective is to investigate a  time-stepping scheme using a first-order convolution quadrature in time. Pointwise-in-time optimal  error estimates 
are then derived. The main technical tool relies on the use of the discrete propagator (discrete evolution operator) associated with the numerical method, see \cite{Lubich-2006}.

The paper   is organized as follows. In Section 2, we represent the solution of \eqref{main} in an integral form and obtain regularity results. In Section 3, we derive  error estimates for the standard Galerkin FEM. A  convolution quadrature time discretization method is analyzed in Section 4, and  related error estimates  are established. In Section 5, we investigate a fully discrete scheme obtained by the lumped mass FEM combined with the convolution quadrature in time.  Finally, we provide some numerical examples to confirm our theoretical results.

Throughout the paper,  $c$  denotes a generic constant which may change at each occurrence but it is always  independent  of discretization parameters; mesh size $h$ and time step size $\tau$. We shall also use the notation $u'$  denoting $\partial u/\partial t$.
%\newpage
%%%%%%%%%%%%%%%%%%%%%%%%%%%%%%%%%%%%%%%%%%%%%%%%%%%%%%%%%%%%%%%%%%%%%%%%%%%%%%%%%%%%%%%%%%%%
%
\section{Continuous problem} \label{sec:notation}
\se
This section is devoted to the analysis of the continuous problem \eqref{main}. Based on an integral    representation of its solution, we prove regularity results, which will play a key role in the error analysis. We begin by introducing some notations.
 For  $r\geq 0$, we denote by  $\dot H^r(\Omega)\subset L^2(\Omega)$ the Hilbert space induced by the norm 
$ \|v\|_{\dot H^r(\Omega)}^2=\sum_{j=1}^\infty \lambda_j^r (v,\phi_j)^2$, where  $\{(\lambda_j,\phi_j)\}_{j=1}^\infty$ are the Dirichlet eigenpairs  of  $A:=-\Delta$ on $\Omega$ with $\{\phi_j\}_{j=1}^\infty$ being an orthonormal basis in $L^2(\Omega)$. Thus, 
 $\|v\|_{\dot H^0(\Omega)}=\|v\|$ is the norm in $L^2(\Omega)$, 
$\|v\|_{\dot H^1(\Omega)}$ is the norm in $H_0^1(\Omega)$, and $\|v\|_{\dot H^2(\Omega)}=\|A v\|$ is the equivalent norm in $H^2(\Omega)\cap H^1_0(\Omega)$ \cite{thomee1997}.
%and let $\{\phi_j\}_{j=1}^\infty$ form an orthonormal basis in $L^2(\Omega)$. 
 % see \cite[Lemma 3.1]{thomee1997}. In particular, $|v|_0=\|v\|$ is the norm on $L^2(\Omega)$, 

For a given $\theta\in (\pi/2,\pi)$, we define the sector $
\Sigma_{\theta}=\{z\in \mathbb{C}, \,z\neq 0,\, |\arg z|< \theta\}
$. Since $A$ is selfadjoint and positive definite, the operator
$(z^\alpha I+A)^{-1}:L^2(\Omega)\to L^2(\Omega)$ satisfies the bound 
\begin{equation}\label{res1}
\|(z^\alpha I+A)^{-1}\|\leq M |z|^{-\alpha} \quad \forall z\in \Sigma_\theta,
\end{equation}
where $M$ depends on $\theta$.

Let $\hat{u}(x,z)$ denote the the Laplace transform of $u(x,t)$. Set  $w(t)=f(u(t))$. Then, by taking Laplace transforms in \eqref{a1}, we obtain
$$
z \hat{u} - u_0+A \hat{u}+\gamma z^{\alpha}A \hat{u}=\hat{w}(z).
$$
Hence,
$$
\hat{u}=\frac{g(z)}{z}\left(g(z)I+A\right)^{-1} \left( u_0+\hat {w}(z)\right),
$$
where $g(z)=\dfrac{z}{1+\gamma z^\alpha}$.
By means of the inverse Laplace transform, we have
%the solution of \eqref{main} is now given as
\begin{equation}\label{form-1}
u(t)=E(t)u_0+\int_0^t  E(t-s)f(u(s))\,ds,\quad t>0,
\end{equation}
\textcolor{black}{with} the operator $E(t):L^2(\Omega)\to L^2(\Omega)$ being defined by
$$
 E(t) = \frac{1}{2\pi i}\int_{\Gamma_{\theta,\delta}}e^{zt}\frac{g(z)}{z}\left(g(z)I+A\right)^{-1} \,dz,
$$
\textcolor{black}{where, for fixed $\delta>0$, 
$\Gamma_{\theta,\delta}:=\{\rho e^{\pm i\theta}:\; \rho\geq\delta\}\cup \{\delta e^{i \psi}:\; |\psi|\leq\theta\} $
is oriented with an increasing imaginary part.}

The following estimates hold, see \cite[Theorem 2.1]{EJLZ2016}.
\begin{lemma}\label{LL} The operator $E(t)$ satisfies % for all $ t\in (0,T]$:

$$ \| \partial_t^m {E}(t)v\|_{\dot{H}^p(\Omega)}\leq ct^{-m-(1-\alpha)(p-q)/2}  \|v\|_{\dot{H}^q(\Omega)},$$
where  $m=0$ and $0\leq q\leq p\leq 2$ or  $m>0$ and $0\leq p,\, q \, \leq 2$.
\end{lemma}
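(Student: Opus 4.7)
The plan is to deduce the estimate from the Dunford-type contour representation of $E(t)$ combined with spectral bounds on the resolvent $(g(z)I+A)^{-1}$. Differentiating the defining integral $m$ times under the integral sign (permissible because $|e^{zt}|$ decays along $\Gamma_{\theta,\delta}$ when $\theta>\pi/2$) gives
\begin{equation*}
\partial_t^m E(t) = \frac{1}{2\pi i}\int_{\Gamma_{\theta,\delta}} e^{zt}\, z^{m-1}\, g(z)\,(g(z)I+A)^{-1}\,dz.
\end{equation*}
Since $E(t)$ commutes with fractional powers of $A$, writing $A^{p/2}=A^{(p-q)/2}A^{q/2}$ reduces the task to estimating $\|A^{s}(g(z)I+A)^{-1}\|$ for $s=(p-q)/2$ applied to $A^{q/2}v$, so that $\|A^{q/2}v\|=\|v\|_{\dot H^q(\Omega)}$ appears on the right-hand side.

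The core pointwise estimate is
\begin{equation*}
\|A^{s}(g(z)I+A)^{-1}\| \leq C\,|g(z)|^{s-1}, \qquad 0\leq s\leq 1,
\end{equation*}
which I would derive from the spectral theorem by computing $\sup_{\lambda\geq\lambda_1}\lambda^s/|g(z)+\lambda|$, after verifying that $g(z)$ lies in a sector away from $(-\infty,0]$; this is guaranteed by choosing $\theta\in(\pi/2,\pi/(1+\alpha))$, which is available because $\alpha\in(0,1)$. Combining this with the elementary upper bound $|g(z)|\leq C|z|^{1-\alpha}$ on $\Sigma_\theta$ (obtained from $|1+\gamma z^\alpha|\geq c(1+|z|^\alpha)$) yields
\begin{equation*}
\|A^{p/2}\partial_t^m E(t)v\| \leq C\,\|v\|_{\dot H^q(\Omega)}\int_{\Gamma_{\theta,\delta}}|e^{zt}|\,|z|^{m-1+(1-\alpha)(p-q)/2}\,|dz|.
\end{equation*}
The finish is the standard scaling argument: set $\delta=1/t$ and split $\Gamma_{\theta,\delta}$ into the rays $\rho e^{\pm i\theta}$, $\rho\geq 1/t$, and the arc $|z|=1/t$. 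On the rays $|e^{zt}|=e^{\rho t\cos\theta}$ decays exponentially (since $\cos\theta<0$), and the substitution $u=\rho t$ extracts precisely the factor $t^{-m-(1-\alpha)(p-q)/2}$; the arc gives a matching contribution.

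The delicate point, which I expect to be the main obstacle, is the case $m>0$ with $p<q$, i.e. $s=(p-q)/2\in[-1,0)$, where the resolvent bound $|g(z)|^{s-1}$ fails for large $|g(z)|$. For this range one must instead bound $\|A^{s}(g(z)I+A)^{-1}\|$ by $C\lambda_1^{s}/(|g(z)|+\lambda_1)$ and combine this with the lower control $|g(z)|\geq c|z|^{1-\alpha}$ for $|z|\geq 1$ (together with $|g(z)|\geq c|z|$ for $|z|\leq 1$), splitting the contour accordingly. The extra factor $|z|^{m-1}$ with $m\geq 1$ provides the decay needed to keep all ray integrals convergent and to recover the stated exponent of $t$. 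Once this case is handled, the remaining assertions follow from the routine contour estimates sketched above.
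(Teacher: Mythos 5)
First, a remark on the comparison: the paper does not prove Lemma 2.1 at all, it quotes it from \cite[Theorem 2.1]{EJLZ2016}, and your contour/resolvent strategy is precisely the one used there. For $m=0$, $0\le q\le p\le 2$, and for $m>0$ with $p\ge q$, your sketch is correct: $\partial_t^m E(t)=\frac{1}{2\pi i}\int_{\Gamma_{\theta,\delta}}e^{zt}z^{m-1}g(z)(g(z)I+A)^{-1}\,dz$, the function $g$ maps $\Sigma_\theta$ into a sector of half-angle $<\pi$ (in fact $|\arg g(z)|\le|\arg z|$, so any $\theta\in(\pi/2,\pi)$ works and your restriction $\theta<\pi/(1+\alpha)$ is harmless but unnecessary), $\|A^{s}(g(z)I+A)^{-1}\|\le C|g(z)|^{s-1}$ for $s\in[0,1]$, $|g(z)|\le C|z|^{1-\alpha}$ on $\Sigma_\theta$, and the scaling $\delta=1/t$ yields the stated power of $t$.

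The genuine gap is exactly where you flagged it, and the repair you propose does not work. For $m>0$ and $p<q$, i.e.\ $s=(p-q)/2\in[-1,0)$, the bound $\|A^{s}(g(z)I+A)^{-1}\|\le C\lambda_1^{s}/(|g(z)|+\lambda_1)$ controls the integrand only by $C|e^{zt}|\,|z|^{m-1}$, since $|g(z)|/(|g(z)|+\lambda_1)\le 1$; this gives $\|\partial_t^m E(t)v\|_{\dot H^p(\Omega)}\le Ct^{-m}\|v\|_{\dot H^q(\Omega)}$ and nothing better. Indeed, for $t\le 1$ the whole contour $\Gamma_{\theta,1/t}$ lies in $\{|z|\ge 1\}$, where $|g(z)|\ge c|z|^{1-\alpha}\ge c$, so $|g(z)|/(|g(z)|+\lambda_1)$ is bounded below and no additional decay in $|z|$ (hence no additional positive power of $t$) can be extracted; the factor $|z|^{m-1}$ and your lower bounds on $|g|$ do not change this. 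But the assertion for $p<q$ is strictly stronger than $t^{-m}$ as $t\to 0$: e.g.\ $m=1$, $p=0$, $q=2$ claims $t^{-\alpha}$, not $t^{-1}$. The missing idea is a cancellation. Write $A^{s}=A^{1+s}A^{-1}$ and use the identity $g(z)(g(z)I+A)^{-1}A^{-1}=A^{-1}-(g(z)I+A)^{-1}$; since $\int_{\Gamma_{\theta,\delta}}e^{zt}z^{m-1}\,dz=0$ for $m\ge 1$ and $t>0$ (the integrand is entire and single-valued), the $A^{-1}$ term drops out, and the remaining term is estimated by $\|A^{1+s}(g(z)I+A)^{-1}\|\le C|g(z)|^{s}$ (now the exponent $1+s$ lies in $[0,1]$), combined with the lower bounds $|g(z)|\ge c|z|^{1-\alpha}$ for $|z|\ge 1$ and $|g(z)|\ge c|z|$ for $|z|\le 1$. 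The contour estimate then gives $t^{-m-s}+t^{-m-(1-\alpha)s}\le C_T\,t^{-m-(1-\alpha)s}$ on $(0,T]$, which is the claimed rate; equivalently, one can prove the case $q=2$ this way and interpolate with the case $q=p$. This cancellation step is what your argument is missing.
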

%To present the regularity of the solution of \eqref{main}, 
In the sequel, we shall use  the following generalization of Gr\"onwall's inequality  \cite{Amann}.
\begin{lemma}\label{Gronwall}
Let $T > 0$, $0 \leq \alpha, \beta < 1$ and $A, B \geq 0$. Then there is a positive
constant $C = C(T,B,\alpha, \beta)$ such that
$$y(t)\leq At^{-\alpha} + B \int_0^t  (t- s)^{- \beta}y(s)ds,\quad\ 0 < t \leq T,$$
implies
$$y(t) \leq CAt^{-\alpha},\quad\ 0 < t \leq T.$$
\end{lemma}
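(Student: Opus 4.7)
The plan is a Picard-type iteration of the integral inequality. Introduce the operator $(T\varphi)(t):=B\int_0^t(t-s)^{-\beta}\varphi(s)\,ds$, so that the hypothesis reads $y\leq A t^{-\alpha}+Ty$. Feeding this bound back into itself $n$ times yields, for every integer $n\geq 0$,
\[
y(t)\;\leq\;\sum_{k=0}^{n} T^{k}\!\bigl(A(\cdot)^{-\alpha}\bigr)(t) \;+\; T^{n+1}y(t),\qquad 0<t\leq T.
\]
The strategy is then to show that the series on the right converges to a bound of the form $CAt^{-\alpha}$ while the remainder $T^{n+1}y(t)$ tends to zero.

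The core computation is to evaluate the iterates $T^k(At^{-\alpha})$ in closed form, by induction on $k$ using the Beta-function identity
\[
\int_0^t (t-s)^{a-1}s^{b-1}\,ds=\frac{\Gamma(a)\Gamma(b)}{\Gamma(a+b)}\,t^{a+b-1},\qquad a,b>0.
\]
The conditions $0\leq\alpha,\beta<1$ guarantee $a=1-\beta>0$ at every step and $b=1-\alpha>0$ initially, so the identity applies. A routine induction produces
\[
T^{k}\!\bigl(A(\cdot)^{-\alpha}\bigr)(t)\;=\;A\,\frac{\bigl(B\,\Gamma(1-\beta)\bigr)^{k}\,\Gamma(1-\alpha)}{\Gamma\bigl(k(1-\beta)+1-\alpha\bigr)}\;t^{\,k(1-\beta)-\alpha}.
\]
Since $1-\beta>0$, the power series $\sum_{k\geq 0}z^{k}/\Gamma(k(1-\beta)+1-\alpha)$ (a two-parameter Mittag-Leffler function) has infinite radius of convergence. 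Evaluating at $z=B\,\Gamma(1-\beta)\,T^{1-\beta}$ and factoring out $At^{-\alpha}$ yields a finite constant $C=C(T,B,\alpha,\beta)$ such that $\sum_{k=0}^{\infty}T^{k}\!(A(\cdot)^{-\alpha})(t)\leq C A\,t^{-\alpha}$ on $(0,T]$.

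The delicate step, and the place where some care is needed, is disposing of the remainder $T^{n+1}y(t)$ as $n\to\infty$. The hypothesis implicitly requires that the convolution $\int_0^t(t-s)^{-\beta}y(s)\,ds$ be finite, so $y$ is locally integrable on $(0,T]$. A direct computation shows that the kernel of $T^{n+1}$, obtained by interchanging the order of integration, has the explicit form $B^{n+1}\Gamma(1-\beta)^{n+1}(t-s)^{(n+1)(1-\beta)-1}/\Gamma((n+1)(1-\beta))$, whose leading constant decays super-exponentially due to the Gamma function in the denominator. This forces $T^{n+1}y(t)\to 0$ uniformly on compact subsets of $(0,T]$, justifying the passage to the limit and delivering the desired estimate $y(t)\leq CAt^{-\alpha}$. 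I expect this remainder argument to be the main obstacle, since the other steps are essentially algebraic identities; the cleanest way to formalise it is to restrict to $y$ in a suitable weighted $L^\infty$ space on $(0,T]$ before iterating.
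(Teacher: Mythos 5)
The paper does not prove this lemma at all: it is quoted verbatim from the reference [Amann] (Lemma \ref{Gronwall} is cited, not proved), so there is no in-paper argument to compare against. Your proposal is the standard proof of the singular (weakly singular kernel) Gr\"onwall inequality, essentially Henry's Lemma 7.1.1: iterate the monotone operator $(T\varphi)(t)=B\int_0^t(t-s)^{-\beta}\varphi(s)\,ds$, compute $T^k\bigl(A(\cdot)^{-\alpha}\bigr)$ by the Beta-function identity, sum the resulting Mittag--Leffler series, and kill the remainder $T^{n+1}y$ using the super-exponential growth of $\Gamma\bigl((n+1)(1-\beta)\bigr)$. Your closed-form expressions for the iterates and for the kernel of $T^{n+1}$ are correct, and the remainder estimate $\|T^{n+1}y\|_{L^\infty(0,T)}\leq \frac{(B\Gamma(1-\beta))^{n+1}\max\{T^{(n+1)(1-\beta)-1},1\}}{\Gamma((n+1)(1-\beta))}\int_0^T y(s)\,ds\to 0$ goes through once $y\in L^1(0,T)$. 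The only point to make explicit is the implicit hypotheses on $y$: the iteration $y\leq\sum_k T^k(A(\cdot)^{-\alpha})+T^{n+1}y$ uses monotonicity of $T$, which requires $y$ to be nonnegative (or to replace $y$ by $|y|$ at the outset) and measurable, and the remainder step needs local integrability; for nonnegative $y$ the latter does follow from finiteness of the convolution, since $(t-s)^{-\beta}\geq t^{-\beta}$ on $(0,t)$ gives $\int_0^t y(s)\,ds\leq t^{\beta}\int_0^t(t-s)^{-\beta}y(s)\,ds<\infty$. With these (standard, and implicit in Amann's formulation) assumptions stated, your argument is complete and correct, and it is in fact more self-contained than the paper, which relies on the citation.
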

Note that, by  the Lipschitz continuity of $f$, 
\begin{equation*}\label{f(u)}
\|f(u)\|\leq \|f(u)-f(0)\|+ \|f(0)\|\leq L\|u\|+ \|f(0)\|.
\end{equation*}
Using \eqref{form-1} and  Lemma \ref{LL}, we then get
\begin{eqnarray*}
\|u(t)\| & \leq & c\|u_0\| +c\int_0^t \|f(u(s))\|\,ds \\
& \leq & c\|u_0\| + ct\|f(0)\|+cL\int_0^t\|u(s)\|\,ds.
\end{eqnarray*}
\textcolor{black}{ By Lemma \ref{Gronwall}, we obtain the stability result 
\begin{eqnarray*}
\|u(t)\| & \leq & c(\|u_0\| + t\|f(0)\|).
\end{eqnarray*}}
Further properties of the solution $u$ are given below.
\begin{theorem}\label{T-1}  
Assume $u_0\in \dot{H}^\nu(\Omega)$, $\nu\in [0,2]$. Then problem  \eqref{main}  has a unique   solution  $u$ satisfying
\begin{equation}\label{regularity-1a}
 u\in C([0,T];\dot{H}^\nu(\Omega))\cap C((0,T];\dot{H}^2(\Omega)).
\end{equation}
Furthermore, 
\begin{equation}\label{regularity-1b}
  \| u(t) \|_{\dot{H}^p(\Omega)} \leq ct^{(\alpha-1) (p-\nu)/2}, \quad 0\leq \nu\leq p\leq 2,
\end{equation}
and %$0\leq \nu\leq 2$
\begin{equation}\label{regularity-2}
  \|u'(t) \|_{\dot{H}^p(\Omega)} \leq ct^{(\alpha-1) (p-\nu)/2-1}, \quad p\in [0,1].
\end{equation}
The constant $c$  may depend on $T$.
\end{theorem}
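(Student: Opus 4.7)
The strategy is to exploit the integral representation \eqref{form-1} throughout, together with Lemma \ref{LL} and the generalized Gr\"onwall inequality of Lemma \ref{Gronwall}.

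First I would establish existence and uniqueness of a mild solution $u\in C([0,T];L^2(\Omega))$ by a standard Picard iteration applied to the map
\[
(\mathcal{T}u)(t) = E(t)u_0 + \int_0^t E(t-s)f(u(s))\,ds;
\]
Lemma \ref{LL} with $m=0$, $p=q=0$ gives $\|E(t)v\|\le c\|v\|$, and the Lipschitz bound \eqref{Lip} makes $\mathcal{T}$ a contraction on $C([0,t_1];L^2(\Omega))$ for $t_1$ small, which is then extended to $[0,T]$ by iteration. Continuity of $u$ into $\dot H^\nu(\Omega)$ at $t=0$ follows from the strong continuity $E(t)u_0\to u_0$ in $\dot H^\nu(\Omega)$ together with the vanishing of the integral term as $t\to 0^+$; continuity into $\dot H^2(\Omega)$ on $(0,T]$ comes from the smoothing of $E(t)$ provided by Lemma \ref{LL}.

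For \eqref{regularity-1b}, I would apply $\|\cdot\|_{\dot H^p(\Omega)}$ to \eqref{form-1} and invoke Lemma \ref{LL} with $q=\nu$ on the first term and $q=0$ on the integrand. The stability estimate already derived in the text gives $\|f(u(s))\|\le L\|u(s)\|+\|f(0)\|\le c$, so the integral contributes at most $ct^{1+(\alpha-1)p/2}$, which is dominated by $ct^{(\alpha-1)(p-\nu)/2}$ on $[0,T]$ since the exponent difference $1-(1-\alpha)\nu/2\ge 0$.

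For \eqref{regularity-2}, the plan is to differentiate \eqref{form-1}. A direct calculation, using $E(0^+)=I$ (which follows from the initial-value theorem applied to the Laplace representation of $E$), yields the key identity
\[
u'(t) = E'(t)u_0 + f(u(t)) + \int_0^t E'(t-s)f(u(s))\,ds,
\]
in which the integral is interpreted with $f(u)$ carrying enough spatial regularity to make the $(t-s)^{-1}$-type kernel integrable. Taking $\|\cdot\|_{\dot H^p(\Omega)}$: the first term contributes exactly $ct^{(\alpha-1)(p-\nu)/2-1}\|u_0\|_{\dot H^\nu(\Omega)}$ by Lemma \ref{LL} with $m=1$; the second is handled via the Lipschitz chain-rule bound $\|f(u(t))\|_{\dot H^p(\Omega)}\le L\|u(t)\|_{\dot H^p(\Omega)}$ combined with \eqref{regularity-1b}; and the integral is estimated via Lemma \ref{LL} with some $q\in(p,1]$ so that the kernel $(t-s)^{-1-(1-\alpha)(p-q)/2}$ is integrable, using $\|f(u(s))\|_{\dot H^q(\Omega)}\le L\|u(s)\|_{\dot H^q(\Omega)}$ and \eqref{regularity-1b} to bound the remaining factor. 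The resulting Beta-function integral yields a bound of the claimed order.

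The hardest step is \eqref{regularity-2}. The naive alternative of keeping the time derivative on $u$, i.e., writing $u'(t)=E'(t)u_0+E(t)f(u_0)+\int_0^t E(t-s)f'(u(s))u'(s)\,ds$ and applying Gr\"onwall to $\|u'(t)\|$, fails in the borderline case $\nu=0$: the bound $\|E'(s)u_0\|\le cs^{-1}\|u_0\|$ is not integrable near $s=0$, so Lemma \ref{Gronwall} does not apply. Shifting the time derivative from $u$ onto $E$ replaces $u'(s)$ inside the integral by $f(u(s))$, whose spatial regularity is inherited from that of $u(s)$ via \eqref{regularity-1b}; this allows the kernel singularity at $s=t$ to be absorbed by a Beta-function computation and yields the estimate uniformly in $\nu\in[0,2]$.
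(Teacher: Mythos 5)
Your plan for existence/uniqueness, \eqref{regularity-1a} and \eqref{regularity-1b} is sound and close in spirit to what the paper does (the paper simply cites the argument of Theorem 3.1 in \cite{MK-2019} for $\nu\in(0,2]$ and for \eqref{regularity-1b} with $\nu=0$). The real issue is \eqref{regularity-2}, and there your route diverges from the paper's and contains a genuine gap. You eliminate $u'$ from the right-hand side by shifting the time derivative onto the kernel, writing $u'(t)=E'(t)u_0+f(u(t))+\int_0^t E'(t-s)f(u(s))\,ds$, and you then need the kernel singularity $\|E'(t-s)v\|_{\dot H^p}\le c(t-s)^{-1-(1-\alpha)(p-q)/2}\|v\|_{\dot H^q}$ to be tamed by spatial regularity of $f(u(s))$, i.e.\ by a bound of the form $\|f(u(s))\|_{\dot H^q}\le L\|u(s)\|_{\dot H^q}$ with $q>p$. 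This ``Lipschitz chain-rule'' bound is not available on the spectral scale $\dot H^q(\Omega)$ beyond $q<1/2$: since $u=0$ on $\partial\Omega$, the composite $f(u)$ equals $f(0)$ on the boundary, so if $f(0)\neq 0$ then $f(u)\notin\dot H^q(\Omega)$ for any $q\ge 1/2$, and for $q>1$ one would additionally need control of $f''$ and nonlinear product estimates that the mere Lipschitz hypothesis \eqref{Lip} does not give. For $p=0$ your argument can be repaired by taking $q\in(0,1/2)$ (using the Slobodeckij seminorm and adding the missing $\|f(0)\|$ term), but for $p=1$ integrability at $s=t$ forces $q>1$, so the Beta-function step collapses and the $H^1$ half of \eqref{regularity-2} — which you need in order to interpolate to all $p\in[0,1]$ — is not established. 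A secondary imprecision: as written, your blanket bound $\|f(u)\|_{\dot H^p}\le L\|u\|_{\dot H^p}$ is false even for $p=0$ without the additive $\|f(0)\|$.

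For contrast, the paper avoids composing $f$ with the fractional smoothness scale altogether. It differentiates \eqref{form-1} in the form $u'(t)=E'(t)u_0+E(t)f(u_0)+\int_0^t E(t-s)f'(u(s))u'(s)\,ds$, multiplies by $t$, splits the weight $t=s+(t-s)$, and integrates by parts (as in \cite[Lemma 5.2]{Mclean2010}) only the portion carrying the weight $(t-s)$; this converts that portion into terms involving $E$, $E'$ and $f(u(s))$ measured in $L^2$ only, leaving an inequality of the form $\|tu'(t)\|\le c+c\int_0^t\|su'(s)\|\,ds$, to which Lemma \ref{Gronwall} applies even in the borderline case $\nu=0$ where, as you correctly observed, the unweighted Gr\"onwall argument fails. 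The same weighted argument gives $\|\nabla u'(t)\|\le ct^{(\alpha-1)(1-\nu)/2-1}$ using only $\|\nabla f(u)\|\le L\|\nabla u\|$ (no boundary condition on $f(u)$ is needed for the gradient bound), and the full range $p\in[0,1]$ then follows by interpolation. If you want to salvage your derivative-on-the-kernel identity, you would have to treat the near-singular part of the convolution on $[t/2,t]$ differently (e.g.\ by an integration by parts or a weighted Gr\"onwall step), which essentially brings you back to the paper's device.
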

\begin{proof}  
%^^^^^^^^^^^^^^^^^^^^^^^^^^^ Step 1 ^^^^^^^^^^^^^^^^^^^^^^^^^^^^^^^^^^^
%{\it Step 1: Existence and uniqueness.}

%%%%%%%%%%%%%%%%%%%%%%%%%%%%%%%%%%%%%%%%%%%%%%%%%%%%%
For $\nu\in (0,2]$, the proof follows the same lines as that  of Theorem 3.1 in \cite{MK-2019}. The latter also  covers the estimate \eqref{regularity-1b} when $\nu=0$, see Step 3 in that proof. Thus, we shall only prove \eqref{regularity-2} for $\nu=0$. To do so, we differentiate both sides of \eqref{form-1} with respect to $t$ so that
\begin{equation}\label{derv}
\begin{split}
u'(t) =E'(t)u_0+ E(t)f(u_0)+\int_0^t  E(t-s)f'(u(s))u'(s)\,ds.
\end{split}
\end{equation} 
Multiplying by $t$, we have
$$tu'(t)=tE'(t)u_0+ tE(t)f(u_0)+\int_0^t s E(t-s)f'(u(s))u'(s)\,ds +\int_0^t (t-s) E(t-s)f'(u(s))u'(s)\,ds.$$
Following \cite[Lemma 5.2]{Mclean2010} and integrating by parts  the last term on the right hand side, we get 
$$\int_0^t (t-s) E(t-s)f'(u(s))u'(s)\,ds= -tE(t)f(u_0)+\int_0^t  E(t-s)f(u(s))\,ds+\int_0^t (t-s) E'(t-s)f(u(s))\,ds.  $$
Hence,
$$tu'(t)=tE'(t)u_0 +\int_0^t s E(t-s)f'(u(s))u' (s)\,ds+\int_0^t  E(t-s)f(u(s))\,ds+\int_0^t (t-s) E'(t-s)f(u(s))\,ds.$$
Using Lemma \ref{LL}, we thus deduce that 
$$\Vert tu'(t)\Vert \leq c +c \int_0^t \Vert s\, u'(s) \Vert\,ds, $$
which, by Lemma \ref{Gronwall}, implies that $\Vert tu'(t)\Vert \leq c.$
The $H^1(\Omega)$-estimate  $\| \nabla u'(t)\| \leq c t^{(\alpha-1)(1-\nu)/2-1}$ is derived in a similar manner. The desired estimate \eqref{regularity-2} follows then by interpolation, which  completes the proof.
\end{proof}

%%%%%%%%%%%%%%%%%%%%%%%%%%%%%%%%%
\begin{comment}
\begin{remark}
Interpolating the estimates in Theorem \ref{T-1} imply that the solution to problem \eqref{main} satisfies for $u_0\in \dot{H}^\nu$
\begin{equation}\label{regul-3}
  \| u^{(m)}(t) \|_p \leq ct^{(\alpha-1) (p-\nu)/2-m}, 
\end{equation}
where $m=0,1$ and $  p\in [0,1].$
\end{remark}
\end{comment}
%%%%%%%%%%%%%%%%%%%%%%%      Semidiscrete Galerlin finite elements %%%%%%%%%%%%%%%%%%%%%%%%%%%%%%%%%%%%%%
%%%%%%%%%%%%%%%%%%%%%%%%%%%%%%%%%%%%%%%%%%%%%%%%%%%%%%%%%%%%%%%%%%%%%%%%%%%%%%%%%%%%%%
%%%%%%%%%%%%%%%%%%%%%%%%%%%%%%%%%%%%%%%%%%%%%%%%%%%%%%%%%%%%%%%%%%%%%%%%%%%%%%%%%%%%%%

\section{Semidiscrete FE scheme} \label{sec:FE}
\se
%In this section, we  describe the Galerkin FE scheme and  we establish related  error estimates.  

Let $\mathcal{T}_h$ be a shape  regular and quasi-uniform triangulation of the domain $\bar\Omega$ into triangles  $K,$
and let $h=\max_{K\in \mathcal{T}_h}h_{K},$ where $h_{K}$ denotes the diameter  of $K.$  
The approximate solution $u_h$ of the Galerkin FEM will be sought in the FE space $V_h$ of continuous piecewise linear functions over the triangulation $\mathcal{T}_h$
$$V_h=\{v_h\in C^0(\overline {\Omega})\;:\;v_h|_{K}\;\mbox{is linear for all}~ K\in \mathcal{T}_h\; \mbox{and} \; v_h|_{\partial \Omega}=0\}.$$
The semidiscrete Galerkin FEM for problem (\ref{main}) now reads: find $u_h(t) \in V_h$ such that
\begin{equation} \label{semi-1}
(\partial_t u_{h} ,\chi)+  a( u_h,\chi)+ \gamma a(\partial_t^{\alpha} u_h,\chi)=  (f(u_h),\chi)\quad
%(\cop u_h,\chi)+a( u_h,\chi)=  (f(u_h),\chi)\quad
\forall \chi\in V_h,\quad t\in (0,T], \quad u_h(0)=P_h u_0,
\end{equation}
where $(\cdot,\cdot)$ is the inner product in $L^2(\Omega)$, 
 $a(v,w):= (\nabla v, \nabla w)$ 
%is the bilinear form associated with the operator $A_h:=-\Delta_h$,
 and  
$P_h:L^2(\Omega)\rightarrow V_h$ is the orthogonal $L^2(\Omega)$-projection.
Upon introducing the discrete  operator $A_h:V_h\rightarrow V_h$ defined by
\begin{equation*} 
(A_h\psi,\chi)=(\nabla \psi,\nabla \chi)  \quad \forall \psi,\chi\in V_h,
\end{equation*}
the spatially discrete problem (\ref{semi-1}) is  equivalent to
\begin{equation} \label{semi-2}
 \partial_t u_{h}(t)+ ( 1+ \gamma \partial_t^{\alpha})A_h u_h= P_h f(u_h(t)),\quad t\in (0,T], \quad u_h(0)=P_hu_0.
\end{equation}
Following the analysis in the previous section, we represent the  solution of \eqref{semi-2} as  
\begin{equation}\label{form-1d}
u_h(t)=E_h(t)P_hu_0+\int_0^t { E}_h(t-s)P_hf(u_h(s))\,ds,
\end{equation}
where $E_h(t):V_h\to V_h$ is defined by
$$
 E_h(t) = \frac{1}{2\pi i}\int_{\Gamma_{\theta,\delta}}e^{zt} \frac{g(z)}{z}\left(g(z)I+A_h\right)^{-1} \,dz.
$$

In order to bound the  FE error $e_h(t):=u_h(t)-u(t)$, we  introduce the operator 
 $$S_h(z):=(g(z) I+A_h)^{-1}P_h-(g(z) I+A)^{-1},$$ 
which   satisfies the following properties, see \cite{LST-1996}.
\begin{lemma}\label{G} 
The following estimate holds for all $z\in\Sigma_\theta$,
\begin{equation*} 
\|S_h(z)v\|+h\|\nabla S_h(z)v\|\leq ch^2 \|v\|,
\end{equation*}
%\begin{equation} \label{111}
%\|S_h(z)v\|_{L^\infty(\Omega)}+ \leq ch^2 l_h^2 \|v\|_{L^\infty(\Omega)},
%\end{equation}
where $c$ is independent of $h$.
\end{lemma}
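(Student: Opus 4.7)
The plan is to reduce the statement to a resolvent-type error estimate for an elliptic boundary-value problem with a complex coefficient, and then imitate the classical Galerkin FE analysis, taking care of the lack of self-adjointness. Let $w(z):=(g(z)I+A)^{-1}v$ and $w_h(z):=(g(z)I+A_h)^{-1}P_h v$, so that $S_h(z)v=w_h-w$. By the definitions of $A$ and $A_h$ these functions are characterized variationally on $H_0^1(\Omega)$ resp.\ $V_h$ by
\begin{equation*}
B_z(w,\phi):=g(z)(w,\phi)+(\nabla w,\nabla\phi)=(v,\phi),\qquad B_z(w_h,\chi)=(v,\chi),
\end{equation*}
for all admissible $\phi,\chi$. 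Subtracting yields the Galerkin orthogonality $B_z(w-w_h,\chi)=0$ for all $\chi\in V_h$.

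The first preparatory step is to show that $g(\Sigma_\theta)$ is contained in a proper subsector $\Sigma_{\theta'}$ with $\theta'<\pi$ (shrinking $\theta$ slightly from $\pi$ if necessary). This uses only that $\alpha\in(0,1)$ and the definition of $g$. From this sectorial property one derives, by the spectral theorem applied to $A$ and $A_h$, the uniform resolvent bounds
\begin{equation*}
\|(g(z)I+A)^{-1}\|\le c|g(z)|^{-1},\qquad \|A(g(z)I+A)^{-1}\|\le c,
\end{equation*}
with identical bounds for $A_h$, all with constants independent of $z\in\Sigma_\theta$ and of $h$. In particular $\|Aw\|\le c\|v\|$, which is the $H^2$-regularity needed for the error analysis.

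Next I would introduce the Ritz projection $R_h:H_0^1(\Omega)\to V_h$, $(\nabla R_h w,\nabla\chi)=(\nabla w,\nabla\chi)$ for $\chi\in V_h$, and split $w-w_h=\rho+\eta$ with $\rho:=w-R_hw$ and $\eta:=R_hw-w_h\in V_h$. Classical approximation theory together with $\|Aw\|\le c\|v\|$ gives $\|\rho\|+h\|\nabla\rho\|\le ch^2\|v\|$. From Galerkin orthogonality and the definition of $R_h$ one checks that $\eta$ satisfies $B_z(\eta,\chi)=g(z)(\rho,\chi)$ for $\chi\in V_h$, equivalently $\eta=g(z)(g(z)I+A_h)^{-1}P_h\rho$. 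Applying the uniform resolvent bound above yields $\|\eta\|\le c\|\rho\|\le ch^2\|v\|$, which combined with the bound on $\rho$ controls the $L^2$ part of $S_h(z)v$.

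For the gradient estimate, the cleanest route is to test $B_z(\eta,\cdot)=g(z)(\rho,\cdot)$ with $\chi=\eta$ and exploit the sectorial coercivity $|B_z(\eta,\eta)|\ge c(|g(z)|\|\eta\|^2+\|\nabla\eta\|^2)$ (obtained by multiplying $B_z(\eta,\eta)$ by a unimodular factor aligning $g(z)$ with the positive real axis), which yields $\|\nabla\eta\|^2\le c|g(z)|\,\|\rho\|\,\|\eta\|\le c|g(z)|\cdot h^2\|v\|\cdot\|\eta\|$. Combining this with the already obtained $\|\eta\|\le ch^2\|v\|$ and the $H^2$-estimate gives $\|\nabla\eta\|\le ch\|v\|$, and the triangle inequality finishes the proof. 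The main technical obstacle is precisely establishing the sectorial coercivity of $B_z$ with constants independent of $z\in\Sigma_\theta$ and $h$; this is where the non-self-adjoint character of the problem bites and forces one to replace the standard Lax--Milgram coercivity by a rotation argument based on the location of $g(\Sigma_\theta)$.
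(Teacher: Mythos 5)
The paper itself does not prove Lemma \ref{G}; it quotes it from \cite{LST-1996}, so your argument has to stand on its own. The $L^2$ half of your proposal does: the sector-mapping property holds (indeed $\arg(1+\gamma z^\alpha)$ lies between $0$ and $\alpha\arg z$, so $|\arg g(z)|\le|\arg z|<\theta$ and no shrinking of $\theta$ is needed), the uniform resolvent bounds $\|(g(z)I+A)^{-1}\|\le c|g(z)|^{-1}$ and $\|A(g(z)I+A)^{-1}\|\le c$ and their discrete analogues follow from selfadjointness with constants independent of $z$ and $h$, and the Ritz-projection splitting gives $\eta=-g(z)(g(z)I+A_h)^{-1}P_h\rho$ (note the sign), hence $\|\eta\|\le c\|\rho\|\le ch^2\|v\|$ and the uniform $L^2$ estimate.

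The gradient half has a genuine gap. Your chain yields $\|\nabla\eta\|^2\le c|g(z)|\,\|\rho\|\,\|\eta\|\le c|g(z)|h^4\|v\|^2$, i.e. $\|\nabla\eta\|\le c|g(z)|^{1/2}h^2\|v\|$, and since $|g(z)|\sim\gamma^{-1}|z|^{1-\alpha}\to\infty$ as $|z|\to\infty$ in $\Sigma_\theta$, this is not bounded by $ch\|v\|$ uniformly in $z$; combining with $\|\eta\|\le ch^2\|v\|$ or with the $H^2$-estimate does not remove the factor $|g(z)|^{1/2}$, so the claimed uniform bound does not follow as written. The step is easily repaired, and the coercivity argument then becomes unnecessary: by the inverse inequality on the quasi-uniform mesh, $\|\nabla\eta\|\le ch^{-1}\|\eta\|\le ch^{-1}\|\rho\|\le ch\|v\|$, whence $\|\nabla S_h(z)v\|\le\|\nabla\rho\|+\|\nabla\eta\|\le ch\|v\|$. (Alternatively, split into the regimes $|g(z)|h^2\le1$, where your bound suffices, and $|g(z)|h^2\ge1$, where $\|\nabla w\|$ and $\|\nabla w_h\|$ are each bounded by $c|g(z)|^{-1/2}\|v\|\le ch\|v\|$ by interpolating the resolvent bounds, so no comparison is needed.) A further minor point: the coercivity $|B_z(\eta,\eta)|\ge c(|g(z)|\|\eta\|^2+\|\nabla\eta\|^2)$ is correct, but not by rotating $g(z)$ onto the positive real axis; $\Re\bigl(e^{-i\arg g(z)}B_z(\eta,\eta)\bigr)$ contains $\cos(\arg g(z))\|\nabla\eta\|^2$, which is negative when $|\arg g(z)|>\pi/2$, a case that occurs since $\theta>\pi/2$. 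Rotating by $e^{-i\arg g(z)/2}$ (or a compactness argument) gives the inequality with constant $\cos(\theta/2)>0$.
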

Let $F_h(t)=E_h(t)P_h-E(t)$. Then, by Lemma \ref{G}, $F_h(t)$ satisfies
\begin{equation} \label{0-p}
\|F_h(t)v\|+h\|\nabla F_h(t)v\| \leq ct^{-(1-\alpha)(1-\nu/2)} h^2 \|v\|_{\dot H^\nu(\Omega)},\quad 
{  \nu \in  [0,2]}.
\end{equation}
Now we are ready to prove an error estimate for the semidiscrete problem \eqref{semi-2}.
\begin{theorem}\label{thm:semi} Let  $u_0\in \dot H^\nu(\Omega)$, $\nu\in [0,2]$.
Let $u$ and $u_h$ be the solutions of problems \eqref{main} and \eqref{semi-2}, respectively.  
Then% there is a constant $c=c(\kappa,L,T)$, where $\kappa\geq\|u_0\|_{\dot H^\nu(\Omega)}+\|f(0)\|$, such that 
\begin{equation} \label{01-bb}
\|e_h(t)\|+h\|\nabla e_h(t)\|\leq ch^2 t^{-(1-\alpha)(1-\nu/2)}, \quad\ t\in (0,T].
 \end{equation}
\end{theorem}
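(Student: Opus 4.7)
The plan is to bound the error $e_h(t) = u_h(t) - u(t)$ through the integral representations (\ref{form-1}) and (\ref{form-1d}), reducing everything to estimates on $F_h(t)$, stability of $E_h(t)P_h$, and a Gr\"onwall-type closure.

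First I would subtract the two integral representations. Adding and subtracting $E(t-s)f(u_h(s))$ (or equivalently using $E_h(t-s)P_h f(u(s))$) gives the splitting
\begin{equation*}
e_h(t) = F_h(t)u_0 + \int_0^t F_h(t-s)f(u(s))\,ds + \int_0^t E_h(t-s)P_h\bigl[f(u_h(s)) - f(u(s))\bigr]\,ds.
\end{equation*}
The first term is bounded directly by (\ref{0-p}) applied with data regularity $\nu$: $\|F_h(t)u_0\| \leq c h^2 t^{-(1-\alpha)(1-\nu/2)}\|u_0\|_{\dot H^\nu(\Omega)}$. For the second term, the stability estimate established just before Theorem \ref{T-1} together with Lipschitz continuity of $f$ yields $\|f(u(s))\| \leq c$ on $[0,T]$, so (\ref{0-p}) with $\nu = 0$ gives $\|F_h(t-s)f(u(s))\| \leq c h^2 (t-s)^{-(1-\alpha)}$, which integrates to $c h^2 t^{\alpha}\leq cT h^2 t^{-(1-\alpha)}$, certainly controlled by $ch^2 t^{-(1-\alpha)(1-\nu/2)}$. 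The third term is handled via the $L^2$-stability $\|E_h(t-s)P_h w\| \leq c\|w\|$ (the discrete analogue of Lemma \ref{LL} with $m=p=q=0$) and the Lipschitz bound $\|f(u_h)-f(u)\|\leq L\|e_h\|$. Setting $y(t)=\|e_h(t)\|$, I obtain
\begin{equation*}
y(t) \leq c h^2 t^{-(1-\alpha)(1-\nu/2)} + c\int_0^t y(s)\,ds,
\end{equation*}
and Lemma \ref{Gronwall} (with $\beta = 0$) produces the desired $L^2$ bound $\|e_h(t)\|\leq ch^2 t^{-(1-\alpha)(1-\nu/2)}$.

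Next I would derive the gradient estimate by applying $h\|\nabla\cdot\|$ to the same splitting. The first two terms are again controlled directly using the gradient part of (\ref{0-p}). For the third term I use the discrete smoothing estimate $\|\nabla E_h(t)P_h w\|\leq c t^{-(1-\alpha)/2}\|w\|$, together with the $L^2$ bound on $e_h$ just obtained, to get
\begin{equation*}
h\|\nabla e_h(t)\| \leq c h^2 t^{-(1-\alpha)(1-\nu/2)} + c h^3\int_0^t (t-s)^{-(1-\alpha)/2} s^{-(1-\alpha)(1-\nu/2)}\,ds.
\end{equation*}
A Beta-function computation shows the integral is $\leq c\, t^{1-(1-\alpha)(3/2-\nu/2)}$, which compared with $t^{-(1-\alpha)(1-\nu/2)}$ differs by a factor $t^{(1+\alpha)/2}\leq T^{(1+\alpha)/2}$. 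Since $h^3\leq h^2 h$ and $h$ is bounded, the second term is absorbed into $ch^2 t^{-(1-\alpha)(1-\nu/2)}$.

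The main obstacle is keeping the temporal singularity sharp across the three terms: the representation naturally gives a singularity $(t-s)^{-(1-\alpha)}$ for the nonlinearity-through-$F_h$ term, which is stronger than required for small $\nu$, while the Lipschitz term carries no singularity and must be closed via Gr\"onwall without destroying the singular factor $t^{-(1-\alpha)(1-\nu/2)}$. The key is to invoke the form of Lemma \ref{Gronwall} with $\beta = 0$ so that the singular forcing survives the iteration, and to feed the already-proved $L^2$ bound into the gradient estimate so that one does not need a second Gr\"onwall in $H^1$.
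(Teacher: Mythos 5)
Your proposal is correct and takes essentially the same route as the paper: the identical splitting of $e_h(t)$ into $F_h(t)u_0$, the $F_h$-convolution with $f(u)$, and the $E_h P_h$-convolution with the Lipschitz difference, bounded via \eqref{0-p}, the boundedness of $E_h$ and $f(u)$, and closed by Lemma \ref{Gronwall}; the paper merely states the $H^1$ bound follows ``analogously,'' whereas you make it explicit by inserting the already-proved $L^2$ bound, which is a fine (equivalent) way to finish. One cosmetic slip: the intermediate quantity $cTh^2t^{-(1-\alpha)}$ is \emph{not} dominated by $ch^2t^{-(1-\alpha)(1-\nu/2)}$ for small $t$ when $\nu>0$, but the term you are actually estimating, $ch^2t^{\alpha}$, clearly is (with a constant depending on $T$), so the argument stands.
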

% %
\begin{proof} Set $\beta= (1-\alpha)(1-\nu/2)$. Then, from \eqref{form-1} and \eqref{form-1d}, we obtain after rearrangements 
\begin{equation*}
e_h(t)= F_h(t)u_0+\int_0^t  {E}_h(t-s) P_h [f(u_h(s))-f(u(s))]\,ds+\int_0^t {F}_h(t-s) f(u(s))\,ds.
\end{equation*}
%where $F_h(t)=E_h(t)P_h-E(t)$ and $\bar F_h(t)=\bar E_h(t)P_h-\bar E(t)$.
%
Using the properties of $F_h$  in \eqref{0-p}  and the boundedness of $\|E_h(s)\|$ and $\|f(u(s))\|$, we deduce 
\begin{eqnarray*}
\|e_h(t)\|&\leq & \| {F}_h(t) u_0\|+cL \int_{0}^{t} \|e(s)\|\,ds + \int_{0}^{t } \| {F}_h(t-s) f(u(s))\|\,ds\\
&\leq & ch^2t^{-\beta} \| u_0\|_{\dot{H}^\nu(\Omega)}+cL \int_{0}^t  \|e(s)\|\,ds+ch^2\int_{0}^{t}(t-s)^{\alpha-1}ds \\
&\leq& ch^2t^{-\beta}+cL \int_{0}^t  \|e(s)\|\,ds+ch^2.
\end{eqnarray*}
An application of Lemma \ref{Gronwall} yields
$
\|e_h(t)\| \leq ch^2t^{-\beta}.
$
The $ H^1(\Omega)$-error estimate is derived analogously, which completes the proof.
\end{proof}

\begin{comment}
\begin{remark}\label{remark-2} 
If $u_0\in \dot H^2(\Omega)$, then one can choose the approximation $u_h(0)=R_hu_0$ in Theorem \ref{thm:semi}. Indeed, let $\tilde{u}_h$ denote the solution of $\eqref{semi-2}$  with the initial condition $\tilde{u}_h(0)=R_h u_0$. Then, $\xi := u_h- \tilde{u}_h$ satisfies
$$
\cop \xi(t)+ A_h \xi(t)= P_h (f(u_h(t))-f(\tilde{u}_h(t))),\quad t\in (0,T], \quad \xi(0)=P_hu_0-R_hu_0.
$$
%Then, repeating the same arguments used for showing the stability the solution $u$, we conclude that
By the Lipschitz continuity of $f$ and the estimates in Lemma \ref{LL}, we get
$$
\|\xi(t)\|\leq c\|\xi(0)\|+c\int_0^t (t-s)^{\alpha-1}\|\xi(s)\|\,ds.
$$
Since  $\|\xi(0)\|\leq ch^2 \|u_0\|_{\dot H^2(\Omega)}$ by Lemma \ref{PR}, an application of Lemma \ref{Gronwall} yields
$\|\xi(t)\|\leq c_Th^2 \|u_0\|_{\dot H^2(\Omega)}$. The desired estimate follows then by the triangle inequality.
\end{remark}
% 
\end{comment}
%%%%%%%%%%%%%%%%%%%%%%%%%%%%%%%%%%%%%%%%%%%%%%%%%%

%%%%%%%%%%%%%%%%%%%%%%%%%%%%%%%%%%%%%%%%%%%%%%%%%%%%%%%%%%%%%%%%%%%%%%
%%%%%%%%%%%%%%%%%%%%%%%%%%%%%%%%%%%%%%%%%%%%%%%%%%%%%%%%%%%%%%%%%%%%%%

\section {Time discretization}\label{sec:TD}
\se
This section is devoted to the analysis of a convolution quadrature time discretization for \eqref{semi-2} generated by the backward Euler (BE) method. Let $0 = t_0 < t_1 < . . . < t_N = T$ be a uniform partition of the time interval $[0, T]$, with grid
points $t_n = n\tau$ and step size $\tau = T/N$.
 Integrating both sides of \eqref{semi-2} over $(0,t)$, we get 
$$u_h(t) -u_h(0) +(\partial_t^{-1}+\gamma\partial_t^{\alpha-1} )A _h u_h(t)=\partial_t^{-1}P_hf(u_h(t)).$$ 
The fully discrete problem is then obtained by approximating the continuous integral by the convolution quadratures $\partial_\tau^{-1} $, $ \partial_\tau^{\alpha-1}$ and $\partial_\tau^{-1} $, respectively, generated by the   BE method, see \cite{Lubich-2004,Lubich-2006}. 
The resulting  time-stepping scheme reads: with $U_h^0=P_hu_0$, find $U^n_h\in V_h$, $n = 1, 2, \ldots,N$, such that
\begin{equation} \label{fully-1}
 U^n_h -U^0_h +(\partial_\tau^{-1}+\gamma \partial_\tau^{\alpha-1} )A_h U^n_h=\partial_\tau^{-1}P_hf(U_h^{n}).
\end{equation} 
We shall investigate  a linearized version of \eqref{fully-1} defined by:
with $U_h^0=P_hu_0$, find $U^n_h$, $n = 1, 2, \ldots,N$, such that
%time-stepping scheme: for the given initial
%value $u_h^0=P_hu_0$ , find $u^n_h$, $n = 1, 2, \ldots ,N$, such that
\begin{equation} \label{fully-2}
U^n_h -U^0_h +(\partial_\tau^{-1}+\gamma \partial_\tau^{\alpha-1} )A_h U^n_h=\partial_\tau^{-1}P_hf(U_h^{n-1}).
\end{equation}
In an expanded form,  we have 
$$
U_h^n-U_h^0+\tau A_h\sum_{j=0}^n q_{n-j}^{(1)} U^n_h+\gamma\tau^{1-\alpha}A_h \sum_{j=0}^n q_{n-j}^{(1-\alpha)} U_h^j= \tau \sum_{j=1}^n q_{n-j}^{(1)} f_h(U_h^{j-1}),
$$
where $f_h=P_hf$ and $q_{j}^{(\alpha)}= (-1)^{j}
\left(\begin{array}{c}
-\alpha\\
j
\end{array}\right),$ see \cite{Lubich-2004,Lubich-2006}.
Rewriting  \eqref{fully-2}  as
\begin{equation}\label{semi-1b}
  U_h^n =(I+(\partial_\tau^{-1}+\gamma\partial_\tau^{\alpha-1}) A_h)^{-1}\left(  U_h^0  + \partial_\tau^{-1}  f_h(U_h^{n-1})\right) ,
\end{equation}
and noting that $U_h^n$ depends linearly and boundedly on $U_h^0$,  and $ f_h(U_h^{j-1})$, $1\leq j\leq n$, 
we deduce the existence of linear and bounded operators $P_n$ and $R_n:V_h\to V_h$, $n\geq 0$, such that  $U_h^n$ is represented  by
\begin{equation}\label{semi-1e}
U_h^n = P_n U_h^0 + \tau \sum_{j=1}^n R_{n-j}  f_h(U_h^{j-1}),
\end{equation}
see \cite[Section 4]{Lubich-2006}.
 The operators $\tau R_n$, $n\geq 0$,  in \eqref{semi-1e} are the convolution quadrature weights corresponding to the Laplace transform
$K(z)=z^{-1}(I+(z^{-1}+\gamma z^{\alpha-1}) A_h)^{-1}$. %, i.e., they are the coefficients in the series expansion
%$$
%\tau\sum_{j=0}^\infty R_j\xi^j=K((1-\xi)/\tau).
%$$
 Since $\|K(z)\|\leq c|z|^{-1}$, an application of Lemma 3.1 in \cite{Lubich-2006}, with $\mu=1$, shows that there is a constant $B>0$, independent of $\tau$, such that 
\begin{equation}\label{R_n}
\|R_n\|\leq B ,\quad n=0,1,2,\ldots.
\end{equation}

For the error analysis, we introduce the intermediate  $v_h(t)\in V_h$ satisfying 
\begin{equation} \label{vva}
\partial_t v_h+(1+\gamma\partial_t^{\alpha})A_hv_h=P_hf(u(t)),\quad v_h(0)=P_h u_0,
\end{equation}  
and the discrete solution $v_h^n\in V_h$ defined by  
\begin{equation} \label{vv}
\partial_\tau v_h^n+(1+\gamma\partial_\tau^{\alpha})A_hv_h^n=P_hf(u(t_n)),\quad n\geq 1,\quad v_h^0=U_h^0.
\end{equation}  
Then an estimation of  $u(t_n)-v_h^n$ is given in the following lemma.
\begin{lemma}  Let $v_h^n$ be the solution to problem \eqref{vv} with $u_0\in \dot{H}^\nu(\Omega)$, $\nu\in(0,2]$. Then  there holds
\begin{equation} 
\begin{split} \label{vv-1}
\|u(t_n)-v_h^n\|\leq &  ct_n^{(1-\alpha)\nu/2-1}\tau+c t_n^{-(1-\alpha)(2-\nu)/2}h^2.
\end{split} 
\end{equation}
\end{lemma}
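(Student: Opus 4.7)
The plan is to insert the semidiscrete intermediate solution $v_h$ of \eqref{vva} and split
$$u(t_n) - v_h^n = \bigl(u(t_n) - v_h(t_n)\bigr) + \bigl(v_h(t_n) - v_h^n\bigr),$$
so that the first bracket is a spatial FE error for the \emph{linear} problem \eqref{vva} (with the source $P_h f(u(\cdot))$ treated as prescribed data) and the second bracket is the pure time-discretization error introduced by the BE convolution quadrature. This mirrors the strategy used in \cite{EJLZ2016} for the linear Rayleigh--Stokes problem.

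For the spatial bracket, I would use the integral representations
$$u(t) = E(t)u_0 + \int_0^t E(t-s) f(u(s))\,ds, \qquad v_h(t) = E_h(t)P_h u_0 + \int_0^t E_h(t-s) P_h f(u(s))\,ds,$$
so that $v_h(t)-u(t) = F_h(t)u_0 + \int_0^t F_h(t-s) f(u(s))\,ds$. Applying \eqref{0-p} with regularity index $\nu$ to the first term and with index $0$ to the convolution, together with the uniform $L^2$-boundedness of $f(u(s))$ provided by \eqref{Lip} and the stability estimate already established, and using $\int_0^{t_n}(t_n-s)^{\alpha-1}ds\leq c\,T^\alpha$, one obtains the $h^2$-part $c\,t_n^{-(1-\alpha)(2-\nu)/2}h^2$ of \eqref{vv-1}.

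For the time bracket, I would represent both $v_h(t_n)$ and $v_h^n$ through their Laplace-transform symbols based on the kernel $\tilde K(z) = z^{-1}g(z)(g(z)I+A_h)^{-1}$, with $z$ replaced by $\delta_\tau(\zeta)=(1-\zeta)/\tau$ in the BE-CQ case (cf.\ \cite{Lubich-2006}). Setting $w(t):=P_h f(u(t))$ and denoting by $\bar E_h^n$ the BE-CQ approximation of $E_h(t_n)$, the error decomposes as
$$v_h(t_n) - v_h^n = [E_h(t_n)- \bar E_h^n] P_h u_0 + \Bigl[\int_0^{t_n} E_h(t_n-s) w(s)\,ds - \tau\sum_{j=1}^n \bar E_h^{n-j+1} w(t_j)\Bigr].$$
For the homogeneous part, standard BE-CQ estimates combined with \eqref{res1} yield a bound of order $c\tau\,t_n^{(1-\alpha)\nu/2-1}\|u_0\|_{\dot H^\nu(\Omega)}$. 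For the source part, I would exploit the regularity furnished by \eqref{regularity-2}: since $f$ is Lipschitz, $\|w'(s)\|\leq c\|u'(s)\|\leq c\,s^{(1-\alpha)\nu/2-1}$, and this fits exactly the exponent appearing in the claim.

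The main technical obstacle is the source bracket: because $w$ is only weakly singular at $t=0$ when $\nu<2$, the naive pointwise BE-CQ estimate $O(\tau\|w'\|_{L^\infty})$ diverges. I would instead split the integration region into $[0,t_n/2]$ (controlled via the uniform boundedness of $\|E_h\|$ together with the integrability of the singular weight $s^{(1-\alpha)\nu/2-1}$) and $[t_n/2,t_n]$ (controlled via integration by parts / Taylor expansion of the smooth tail of $w$). The delicate point is balancing the two pieces so that the resulting bound reproduces exactly the exponent $t_n^{(1-\alpha)\nu/2-1}$ rather than a weaker power; this is where the precise regularity statement of Theorem~\ref{T-1} is essential.
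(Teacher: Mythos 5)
Your decomposition is, up to reordering, the one the paper uses: insert the semidiscrete intermediate solution of \eqref{vva}, bound the spatial bracket via $F_h(t)u_0+\int_0^t F_h(t-s)f(u(s))\,ds$ and \eqref{0-p} (this part of your argument is complete and matches the paper's), and reduce the temporal bracket to a BE--CQ error for a linear problem with prescribed source $w(t)=P_hf(u(t))$. The paper merely organizes it slightly differently: it first splits into the homogeneous problem (whose combined $h^2$ and $\tau$ bounds, for intermediate data $\nu\in(0,2]$, are quoted wholesale from \cite[Remark 4.3]{EJLZ2016}) and the inhomogeneous problem with $u_0=0$, and only then separates space and time errors; that difference is cosmetic.

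The one place where your write-up stops short is exactly the step you flag as the ``main technical obstacle.'' You do not need the interval splitting $[0,t_n/2]\cup[t_n/2,t_n]$ nor any delicate balancing: the paper closes this step by invoking the known inhomogeneous CQ error bound (\cite[Theorem 3.6]{JLZ2016}, applied with $G(z)=\frac{g(z)}{z}(g(z)I+A_h)^{-1}$), which gives $\|I_2\|\leq c\tau\|w(0)\|+c\tau\int_0^{t_n}\|w'(s)\|\,ds$. Since \eqref{regularity-2} and the Lipschitz property give $\|w'(s)\|\leq cs^{(1-\alpha)\nu/2-1}$, which is integrable precisely because $\nu>0$, the integral evaluates to $ct_n^{(1-\alpha)\nu/2}$ and the claimed exponent follows at once (indeed with a factor $t_n^{(1-\alpha)\nu/2}\leq T\,t_n^{(1-\alpha)\nu/2-1}$, so slightly better than needed). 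As written, your sketch leaves this quadrature estimate asserted rather than proved; either carry out the split-interval argument in detail or, as the paper does, cite the ready-made bound whose form $\tau\bigl(\|w(0)\|+\int_0^{t_n}\|w'\|\,ds\bigr)$ absorbs the weak singularity automatically. The same remark applies, more mildly, to the homogeneous temporal part, where ``standard BE--CQ estimates'' should be pinned to the intermediate-data result in \cite{EJLZ2016}.
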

\begin{proof} 
Note that \eqref{vva} and \eqref{vv} can be seen as  semidiscrete and full discretizations of \eqref{main} with a given right-hand side  function $f(u(t))$, respectively. For the homogeneous case $f=0$, the bound \eqref{vv-1} can be found in  \cite[Remark 4.3]{EJLZ2016}. For the inhomogeneous  case with $u_0=0$, we consider the splitting
$$
u(t_n)-v_h^n=(u(t_n)-v_h)+(v_h-v_h^n)=:I_1+I_2.
$$
Then, from the proof  of Theorem \ref{T-1}, it is easily seen that $\|I_1\|\leq  ch^2$.
To estimate $\|I_2\|$, we follow the arguments in the proof of \cite[Theorem 3.6]{JLZ2016} with $G(z)=\frac{g(z)}{z}(g(z)I+A_h)^{-1}$. Using the bound  $\|  u'(s)\|\leq cs^{(1-\alpha)\nu/2-1}$ in Theorem \ref{T-1}, we then deduce that
\begin{eqnarray*} 
\|I_2\| &\leq &  c\tau\|f(u_h(0))\|+ c\tau\int_0^{t_n}\|f'(u(s))  u'(s)\|\,ds\leq  c t_n^{(1-\alpha)\nu/2}\tau,
\end{eqnarray*} 
which completes the proof.
\end{proof}
\begin{remark}
The bound for $\|I_2\|$ does not hold when $\nu=0$, i.e., $u_0\in L^2(\Omega)$. This is due  to the strong singularity in the bound of  $\|  u'(s)\|$.
\end{remark}

%*****************************************************
Now we are ready to derive  error estimates for the   linearized time-stepping scheme  \eqref{fully-2}.
%
%%%%%%%%%%%%%%%%      Fully-Discrete  - Explicit    %%%%%%%%%%%%%%%%%%%%%%%%%%%%%%%%%%%%%%%%%%%%%%%%
%
%
\begin{theorem}\label{thm:fully-2} Let $u_0\in \dot H^\nu(\Omega)$, $\nu\in (0,2]$.
Then the fully discrete scheme \eqref{fully-2} has a unique solution $U_h^n\in V_h$, $0<n\leq N$,  satisfying
\begin{equation} \label{estimate-1c}
\|U_h^n-u(t_n)\|\leq c t_n^{(1-\alpha)\nu/2-1}\tau+c t_n^{-(1-\alpha)(2-\nu)/2}h^2,\quad 0<t_n\leq T,
\end{equation} 
where the constant $c=c(\alpha,\nu,T)$  is independent of $\tau$.
%where $c$ depends  on $\alpha$, $\nu$ and $T$ but is independent of $\tau$.
\end{theorem}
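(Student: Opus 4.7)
The plan is to split the error as
$$U_h^n - u(t_n) = (U_h^n - v_h^n) + (v_h^n - u(t_n)),$$
where $v_h^n$ is the intermediate solution from \eqref{vv}. The preceding lemma bounds the second term by precisely $c t_n^{(1-\alpha)\nu/2-1}\tau + c t_n^{-(1-\alpha)(2-\nu)/2}h^2$, so the task is reduced to controlling $\|U_h^n - v_h^n\|$. For this I would first note that \eqref{fully-2} is linear in $U_h^n$ at each step (the nonlinearity is evaluated at the previous time level), so existence and uniqueness follow from the positive definiteness of the associated spatial operator on $V_h$ coming from the $j=n$ coefficients of the BE convolution weights.

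To estimate $U_h^n - v_h^n$, observe that $v_h^n$ satisfies a convolution-quadrature scheme of the same structure as \eqref{fully-2}, but with source $P_h f(u(t_j))$ at step $j$ instead of $P_h f(U_h^{j-1})$; since $v_h^0 = U_h^0 = P_h u_0$, the representation \eqref{semi-1e} applied to the difference yields
$$U_h^n - v_h^n = \tau \sum_{j=1}^n R_{n-j}\, P_h\bigl[f(U_h^{j-1}) - f(u(t_j))\bigr].$$
Using the uniform operator bound \eqref{R_n} and the Lipschitz property of $f$, and decomposing $\|U_h^{j-1}-u(t_j)\| \leq \|U_h^{j-1}-u(t_{j-1})\| + \|u(t_{j-1})-u(t_j)\|$, I would bound the temporal increments via \eqref{regularity-2}:
$$\sum_{j=1}^n \|u(t_{j-1}) - u(t_j)\| \leq \int_0^{t_n}\|u'(s)\|\,ds \leq c\, t_n^{(1-\alpha)\nu/2}.$$
Setting $E^j := \|U_h^j - u(t_j)\|$ and combining with the lemma, this gives the discrete inequality
$$E^n \leq c\, t_n^{(1-\alpha)\nu/2-1}\tau + c\, t_n^{-(1-\alpha)(2-\nu)/2}h^2 + c\tau\sum_{j=0}^{n-1} E^j,$$
and a discrete Gronwall inequality closes the argument.

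The main technical difficulty lies in ensuring that the Gronwall step preserves the singular time behavior of the right-hand side rather than collapsing it to a constant. This amounts to checking that $\tau\sum_{j=1}^{n-1}$ of the two singular terms, when compared with the corresponding integrals $\int_0^{t_n} s^{(1-\alpha)\nu/2-1}\,ds$ and $\int_0^{t_n} s^{-(1-\alpha)(2-\nu)/2}\,ds$, remain dominated (up to a factor $cT$) by the target bound; integrability here is precisely what the hypothesis $\nu>0$ provides, consistent with the remark following the lemma. A minor subtlety is the contribution $\tau E^0 = \tau\|P_h u_0 - u_0\|\leq c\tau h^\nu$ from the projection error at $t_0$, which can be absorbed into the leading term $c\, t_n^{(1-\alpha)\nu/2-1}\tau$ using $h\leq h_0$ and $t_n\leq T$, at the price of a constant depending on $\alpha$, $\nu$, and $T$.
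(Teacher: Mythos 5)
Your proposal follows essentially the same route as the paper's proof: the same splitting through the intermediate solution $v_h^n$ of \eqref{vv}, the bound \eqref{vv-1}, the discrete-propagator representation with \eqref{R_n}, the Lipschitz-plus-temporal-increment decomposition (the paper's terms $I_2$ and $I_3$), and a discrete Gr\"onwall argument, with your final paragraph merely making explicit the absorption of the singular prefactors (valid since $\nu>0$) that the paper leaves implicit. The only discrepancy is that your representation of $U_h^n-v_h^n$ drops the extra term $-\tau R_n f_h(u(t_0))$ coming from the $j=0$ quadrature weight in \eqref{semi-1d} (the paper's $I_4$), but that term is $O(\tau)$ and is absorbed into the leading bound exactly as you absorb $\tau E^0$, so the argument is unaffected.
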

\begin{proof} 
Notice that \eqref{fully-2} is essentially a linear system with a symmetric positive definite matrix. Thus, for given $U_h^0,\cdots, U_h^{n-1}$, \eqref{fully-2} has a unique solution $U_h^n\in V_h$. 
Similar to \eqref{semi-1e}, the  solution of  \eqref{vv}   may be written as
\begin{equation}\label{semi-1d}
v_h^n = P_n U_h^0  + \tau \sum_{j=0}^n R_{n-j}  f_h(u(t_{j})),\quad n\geq 1,
\end{equation}
and in view of \eqref{semi-1d} and  \eqref{semi-1e}, we have for $0<t_n\leq T$,
\begin{eqnarray*}
U_h^n-u(t_n) & = & U_h^n-v_h^n+v_h^n-u(t_n) \\
& = & v_h^n-u(t_n) +\tau \sum_{j=1}^n R_{n-j} ( f_h(U_h^{j-1})-  f_h(u(t_{j-1})))\\
& & +\tau \sum_{j=1}^n R_{n-j} (  f_h(u(t_{j-1}))- f_h(u(t_j)))-\tau R_{n}f_h(u(t_0))=:\sum_{i=1}^4 I_i.
\end{eqnarray*}
Using \eqref{vv-1}, we readily get  $\| I_1\| \leq c t_n^{(1-\alpha)\nu/2-1}\tau+c t_n^{-(1-\alpha)(2-\nu)/2}h^2.$
For the second term, we use the Lipschitz continuity of $f$ and the estimate \eqref{R_n} to obtain (after a shifting in the summation), 
\begin{eqnarray*}
\| I_2\|&\leq&  L B \tau \sum_{j=0}^{n-1} \| U_h^j-u(t_j)\|.
\end{eqnarray*} % \leq ch^2 t_{n}^{\alpha(1-\nu/2)+\nu/2} +L B \tau \sum_{j=1}^{n-1} \| U_h^j-u(t_j)\|.
To bound $I_3$, we use \eqref{R_n}, the Lipschitz continuity of $f$ and the  estimate $\| u'(t)\|\leq c t^{(1-\alpha) \nu /2-1}$, so that
\begin{eqnarray*}
\|I_3\|&\leq &   \tau LB \sum_{j=1}^{n-1}   \Vert u(t_{j+1})- u(t_j)\Vert+\tau LB \Vert u(t_{1})- u(t_0)\Vert\\
        &\leq &\tau LB \sum_{j=1}^{n-1}   \tau \sup_{t_j\leq s \leq t_{j+1}}\Vert  u'(s)\Vert+ c\tau LB\\
        &\leq &\tau LB  \sum_{j=1}^{n-1} t_j^{(1-\alpha)\nu/2-1}\tau+c\tau LB\\
    &\leq &c\tau LB   t_n^{(1-\alpha)\nu/2},
\end{eqnarray*}
where  $\Vert u(t)\Vert\leq c$ is used. 
For the last term,   \eqref{R_n} and the Lipschitz continuity of $f$ implies that
$\Vert I_4\Vert \leq   cB\tau$. Altogether, we obtain
\begin{eqnarray*}
\|U_h^n-u(t_n)\|&\leq & 
c t_n^{(1-\alpha)\nu/2-1}\tau+c t_n^{-(1-\alpha)(2-\nu)/2}h^2 + \tau LB \sum_{j=0}^{n-1}  \|U_h^{j}-u(t_{j})\|.
\end{eqnarray*}
%Now, by applying Lemma \ref{lem:D3}, we obtain \eqref{estimate-1c},  
Finally, the desired estimate \eqref{estimate-1c} follows by applying the discrete Gr\"onwall inequality. 
\end{proof}

%%%%%%%%%%%%%%%%%%%%%%%%%%%%%%%%%%%%%%%%%%%%%%%%%%%%%%%%%%%%%%%
\section{The lumped mass FEM} \label{sec:LFE}
\se
In this section, we consider the lumped mass piecewise linear FE method  and  derive related convergence rates for smooth and nonsmooth initial data. We begin by defining the quadrature approximation of the $L^2(\Omega)$-inner product on $V_h$  by
\begin{equation*}%\label{QF}
( w,\chi)_h=\sum_{K\in\mathcal{T}_h} Q_{K,h}(w\chi) \quad \mbox{with} \quad Q_{K,h}(f)=\frac{|K|}{3}\sum_{i=1}^3f(P_i)\approx\int_Kf\,dx,
\end{equation*} 
where $P_i$, $i=1,2,3$  are vertices of the triangle $K\in\mathcal{T}_h$.
Then the spatially lumped mass FE scheme for \eqref{main} reads: find $\bar{u}_h(t) \in V_h$ such that
\begin{equation} \label{Lsemi-1}
(\partial_t \bar{u}_{h} ,\chi)_h+  a( \bar{u}_h,\chi)+ \gamma a(\partial_t^{\alpha} \bar{u}_h,\chi)=  (f(\bar{u}_h),\chi)\quad
\forall \chi\in V_h,\quad t\in (0,T], \quad \bar{u}_h(0)=P_h u_0.
\end{equation}

Next we introduce the projection operator 
$\bar{P}_h:L^2(\Omega)\rightarrow V_h$  defined   by
$(\bar{P}_h v, \chi)_h=(v, \chi)$ for all $\chi \in V_h $, and the discrete operator $\bar A_h:V_h\rightarrow V_h$  corresponding to the inner product 
$(\cdot,\cdot)_h$ satisfying  
\begin{equation*} 
(\bar A_h \psi,\chi)_h=(\nabla \psi,\nabla \chi)  \quad \forall \psi,\chi\in V_h.
\end{equation*}
Then  (\ref{Lsemi-1}) is  equivalent to
\begin{equation} \label{Lsemi-2}
\partial_t \bar{u}_{h}(t)+ ( 1+ \gamma \partial_t^{\alpha})\bar{A}_h \bar{u}_h= \bar{P}_h f(\bar{u}_h(t)),\quad \bar{u}_h(0)=P_hu_0.
\end{equation}

Set $\bar{e}_h=\bar{u}_h(t)-u(t)$ and consider the splitting $\bar{e}_h=\bar{u}_h(t)-u_h(t)+u_h(t)-u(t)=:\xi(t)+e_h(t)$, \textcolor{black}{where $u_h$ is the solution of \eqref{semi-2}}.
Subtracting $\eqref{semi-1}$ from $ \eqref{Lsemi-1}$, we have $\forall \chi \in V_h$
\begin{equation*}
(\xi'(t) ,\chi)_h+  ( \nabla\xi(t),\nabla\chi)+ \gamma (\partial_t^{\alpha} \nabla\xi(t),\nabla\chi)= (u_{h}' ,\chi)-(u_{h}' ,\chi)_h+  (f(\bar{u}_h),\chi)- (f(u_h),\chi).
\end{equation*}
Hence, $\xi(t)$ satisfies
\begin{equation}\label{LMq}
 \xi'(t)+ ( 1+ \gamma \partial_t^{\alpha})\bar{A}_h \xi(t)= -\bar{A}_hQ_h u_{h}'(t)+ \bar{P}_h(f(\bar{u}_h(t))-f(u_h(t))),\quad t\in (0,T], \quad \xi(0)=0,
\end{equation}
where $Q_h:V_h\rightarrow V_h$ is the quadrature error defined by
\begin{equation}\label{m8}
(\nabla Q_h\chi,\nabla\psi)=(\chi,\psi)_h-(\chi,\psi)\quad \forall \psi \in V_h.
\end{equation}
A key property of $Q_h$ is given in the following lemma, see \cite{CLT-2012}.
\begin{lemma}\label{lem:Qh}
Let $Q_h$ be defined by \eqref{m8}. Then there holds
\begin{equation*}
\|\nabla Q_h\chi\|+h\|\bar{A}_hQ_h\chi\|\leq ch^{p+1}\|\nabla^{p}\chi\|\quad \forall \chi\in V_h, \quad p=0,1,
\end{equation*}
where the constant $c$ is independent of $h$.
\end{lemma}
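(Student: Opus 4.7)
The plan is to reduce both parts of the estimate to a single bilinear-form quadrature error bound on $V_h$, and then bootstrap to the $\bar A_h$ bound via an inverse inequality. First I would establish the auxiliary estimate
$$|(\chi,\psi)_h - (\chi,\psi)| \leq c h^{1+p}\|\nabla^p \chi\|\,\|\nabla\psi\|, \quad \chi,\psi\in V_h,\; p=0,1.$$
Setting $\psi = Q_h\chi$ in the defining identity \eqref{m8} then immediately gives $\|\nabla Q_h\chi\|^2 \leq c h^{1+p}\|\nabla^p\chi\|\,\|\nabla Q_h\chi\|$, and cancelling one factor yields the first half of the claim.

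To prove the quadrature estimate, I would work elementwise on each $K\in \mathcal{T}_h$. Since $\chi,\psi$ are affine on $K$, the product $\chi\psi$ lies in $P_2(K)$, and the three-vertex Lagrange rule is exact on $P_1(K)$. A Bramble--Hilbert argument then controls the local error by $c h_K^2\,\|D^2(\chi\psi)\|_{L^1(K)}$; since $\nabla\chi$ and $\nabla\psi$ are constant on $K$, one has $D^2(\chi\psi)=\nabla\chi\otimes\nabla\psi+\nabla\psi\otimes\nabla\chi$, and its $L^1(K)$-norm is bounded by $\|\nabla\chi\|_{L^2(K)}\|\nabla\psi\|_{L^2(K)}$. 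Summing over $K$ and applying Cauchy--Schwarz in the element index yields the $p=1$ case. For $p=0$ I would additionally apply the inverse inequality $\|\nabla\chi\|_{L^2(K)}\leq c h_K^{-1}\|\chi\|_{L^2(K)}$ to one factor, losing exactly one power of $h$.

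For the second bound, I would use the defining relation of $\bar A_h$ with test function $\phi = \bar A_h Q_h\chi$, which gives
$$\|\bar A_h Q_h\chi\|_h^2 = (\nabla Q_h\chi,\nabla \bar A_h Q_h\chi).$$
Applying the inverse inequality $\|\nabla v\| \leq c h^{-1}\|v\|$ valid for $v\in V_h$, the norm equivalence $\|\cdot\|_h \sim \|\cdot\|$ on $V_h$ inherited from quasi-uniformity, and the bound for $\|\nabla Q_h\chi\|$ already established, I obtain $\|\bar A_h Q_h\chi\| \leq c h^{-1}\|\nabla Q_h\chi\| \leq c h^{p}\|\nabla^p\chi\|$, so that $h\,\|\bar A_h Q_h\chi\|\leq c h^{p+1}\|\nabla^p\chi\|$ as desired.

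The main obstacle is the $p=0$ case of the quadrature estimate: the Bramble--Hilbert computation delivers the $p=1$ version essentially by hand, but passing from $p=1$ to $p=0$ is not purely local and requires invoking the inverse inequality, which is precisely where quasi-uniformity of $\mathcal{T}_h$ is used in an essential way.
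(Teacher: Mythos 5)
Your argument is correct and is essentially the standard proof from \cite{CLT-2012}, which the paper cites for this lemma rather than proving it: an elementwise Bramble--Hilbert bound for the quadrature error functional $(\chi,\psi)_h-(\chi,\psi)$ (with a local inverse estimate to trade $\|\nabla\chi\|$ for $h^{-1}\|\chi\|$ in the $p=0$ case), the choice $\psi=Q_h\chi$ in \eqref{m8} for the gradient bound, and then the defining relation of $\bar A_h$ tested with $\bar A_hQ_h\chi$ together with the norm equivalence $\|\cdot\|_h\sim\|\cdot\|$ on $V_h$ and an inverse inequality for the second bound. The only minor imprecision is your closing remark: the passage from $p=1$ to $p=0$ in the quadrature estimate is in fact purely local (it needs only shape regularity via the elementwise inverse estimate), whereas quasi-uniformity enters through the global inverse inequality $\|\nabla v\|\leq ch^{-1}\|v\|$ used for $v=\bar A_hQ_h\chi$; since the paper assumes quasi-uniform meshes, this does not affect the validity of your proof.
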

Solving \eqref{LMq} for $\xi$ using the Laplace transform, we have
\begin{equation}\label{LMS}
\xi(t)=\int_0^t { \bar E}_h(t-s)\left[-\bar{A}_hQ_h u_{h}'(s) + \bar{P}_h(f(\bar{u}_h(t))-f(u_h(t)))\right]ds,
\end{equation}
where
$$
 \bar E_h(t) = \frac{1}{2\pi i}\int_{\Gamma_{\theta,\delta}}e^{zt} \frac{g(z)}{z}\left(g(z)I+\bar{A}_h\right)^{-1}dz.
$$
Since the operator $\bar{A}_h$ is selfadjoint and positive definite,  $\bar E_h(t)$ satisfies  (see Lemma \ref{LL})
\begin{equation} \label{Lbb}
 \| \bar A^{p/2}  \partial_t^m \bar E_h(t)v\|\leq ct^{-m-(1-\alpha)(p-q)/2}  \|\bar A^{q/2}  v\|.
 \end{equation}
 Error estimates for smooth initial date are given in the following theorem.
\begin{theorem} Let $u$ be the solution of \eqref{main}  with $u_0\in \dot H^\nu(\Omega)$, $\nu\in [1,2]$. Let  $\bar{u}_h$ be the solution of  \eqref{Lsemi-2}.  
Then %there is a constant $c=c(\kappa,L,T)$, where $\kappa\geq\|u_0\|_{\dot H^\nu(\Omega)}+\|f(0)\|$, such that 
\begin{equation} \label{0-Lbb}
\|\bar{e}_h(t)\|+h\|\nabla \bar{e}_h(t)\|\leq ch^2 t^{-(1-\alpha)(1-\nu/2)}, \quad t\in (0,T].
 \end{equation}
\end{theorem}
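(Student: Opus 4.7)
The plan is the triangle-inequality split $\bar e_h = \xi + e_h$ with $\xi := \bar u_h - u_h$ and $e_h := u_h - u$. Since Theorem \ref{thm:semi} already supplies the bound $\|e_h(t)\|+h\|\nabla e_h(t)\|\le ch^2 t^{-(1-\alpha)(1-\nu/2)}$, the task reduces to proving the analogous estimate for $\xi$. From the representation \eqref{LMS} I split $\xi = J_1 + J_2$, where
\[
J_1(t):=-\int_0^t \bar E_h(t-s)\,\bar A_h Q_h u_h'(s)\,ds,\qquad J_2(t):=\int_0^t \bar E_h(t-s)\,\bar P_h\bigl(f(\bar u_h(s))-f(u_h(s))\bigr)\,ds.
\]
The Lipschitz tail $J_2$ is routine: the $L^2$-boundedness of $\bar P_h$ and of $\bar E_h(t-s)$ (from \eqref{Lbb} with $m=p=q=0$) together with the Lipschitz hypothesis on $f$ give $\|J_2(t)\|\le c\int_0^t\|\xi(s)\|\,ds$, while the $H^1$ analogue acquires an additional smoothing factor $(t-s)^{-(1-\alpha)/2}$ from \eqref{Lbb} with $p=1$, $q=0$.

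The main obstacle is $J_1$. The key trick is to commute $\bar A_h$ with $\bar E_h(t)$ (they share the same spectral basis) and to distribute the power symmetrically between the semigroup and the quadrature error:
\[
\bar E_h(t-s)\,\bar A_h Q_h u_h'(s) = \bar A_h^{1/2}\bar E_h(t-s)\cdot \bar A_h^{1/2} Q_h u_h'(s).
\]
Then \eqref{Lbb} with $p=1$, $q=0$, $m=0$ gives $\|\bar A_h^{1/2}\bar E_h(t-s)\|\le c(t-s)^{-(1-\alpha)/2}$, and Lemma \ref{lem:Qh} with $p=1$ yields $\|\bar A_h^{1/2} Q_h u_h'(s)\| = \|\nabla Q_h u_h'(s)\| \le ch^2\|\nabla u_h'(s)\|$, delivering the crucial factor $h^2$. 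The missing ingredient is the $h$-uniform regularity $\|\nabla u_h'(s)\|\le cs^{(\alpha-1)(1-\nu)/2-1}$, which is the semidiscrete analogue of Theorem \ref{T-1} and is obtained by replaying the Laplace-representation argument of Section \ref{sec:notation} with $(E,A)$ replaced by $(E_h,A_h)$, using that the resolvent bound \eqref{res1} is inherited by the discrete operator $A_h$. Substituting and recognising the resulting convolution as an incomplete Beta integral gives
\[
\|J_1(t)\|\le ch^2\int_0^t (t-s)^{-(1-\alpha)/2}\,s^{(\alpha-1)(1-\nu)/2-1}\,ds \le ch^2 t^{-(1-\alpha)(1-\nu/2)},
\]
for $\nu\in(1,2]$. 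At the endpoint $\nu=1$ the $s$-integrand is borderline non-integrable at the origin, and I would handle it by splitting the convolution at $s=t/2$: on the outer piece $s\in(t/2,t)$ the above bound remains valid, while on the inner piece $s\in(0,t/2)$ one uses the $p=0$ alternative in Lemma \ref{lem:Qh} together with the integrable $L^2$ regularity $\|u_h'(s)\|\le cs^{(1-\alpha)/2-1}$ from Theorem \ref{T-1}, after which interpolation with the $\nu=2$ bound recovers the claimed power.

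Assembling the two bounds produces $\|\xi(t)\|\le ch^2 t^{-(1-\alpha)(1-\nu/2)}+c\int_0^t\|\xi(s)\|\,ds$, and Lemma \ref{Gronwall} (with $\beta=0$) closes the $L^2$ estimate for $\xi$; together with Theorem \ref{thm:semi} this yields the $L^2$ bound for $\bar e_h$. The $H^1$ bound is derived along the same lines, by promoting each $\bar E_h$ bound by one additional factor of $\bar A_h^{1/2}$ (so that the convolution kernel acquires exponent $(1-\alpha)/2<1$) and invoking Lemma \ref{Gronwall} with $\beta=(1-\alpha)/2$. The delicate step in the whole argument is precisely the balancing in $J_1$---allocating exactly half of $\bar A_h$ to each of $\bar E_h$ and $Q_h$ so as to convert the quadrature error into the sharp factor $h^2$ at the price of only an integrable singularity in $(t-s)$; once this balancing is correctly set up, the remainder of the argument is bookkeeping.
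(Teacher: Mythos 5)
Your overall route is the paper's route: the same splitting $\bar e_h=\xi+e_h$ with Theorem \ref{thm:semi} handling $e_h$, the representation \eqref{LMS}, the pairing of the smoothing estimate \eqref{Lbb} (your $\bar A_h^{1/2}$-balancing is exactly \eqref{Lbb} with $p=2$, $q=1$) with Lemma \ref{lem:Qh} to extract the factor $h^2$, and a Gr\"onwall closure via Lemma \ref{Gronwall}. Your only genuine departure is to treat all $\nu\in(1,2]$ at once through the Beta-integral computation, using the $h$-uniform semidiscrete analogue of \eqref{regularity-2}, instead of proving $\nu=2$ and $\nu=1$ separately and interpolating as the paper does; for $\nu>1$ this is correct (the implicit reliance on discrete regularity of $u_h'$ is shared with the paper), and it is arguably cleaner away from the endpoint. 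Note, however, that your Beta constant degenerates as $\nu\to1^{+}$, so the endpoint really does require a separate argument.

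The gap is precisely at $\nu=1$, which the theorem includes. On the inner piece $s\in(0,t/2)$ you invoke Lemma \ref{lem:Qh} with $p=0$, but that bound reads $\|\nabla Q_h\chi\|\le ch\|\chi\|$: it delivers only one power of $h$, and combined with $\|u_h'(s)\|\le cs^{(1-\alpha)/2-1}$ the inner contribution is $O(h)$, not $O(h^2t^{-(1-\alpha)/2})$. The proposed repair, ``interpolation with the $\nu=2$ bound,'' cannot work: interpolating an $O(h)$ estimate at $\nu=1$ with an $O(h^2)$ estimate at $\nu=2$ gives at best $O(h^{\nu})$, which neither establishes \eqref{0-Lbb} at $\nu=1$ nor improves the intermediate cases you already have. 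The missing idea — the one the paper uses — is to integrate by parts in $s$ on $(0,t/2)$, writing $\int_0^{t/2}\bar E_h(t-s)\bar A_hQ_hu_h'(s)\,ds=\bar E_h(t/2)\bar A_hQ_hu_h(t/2)-\bar E_h(t)\bar A_hQ_hu_h(0)+\int_0^{t/2}\bar E_h'(t-s)\bar A_hQ_hu_h(s)\,ds$, so that only $u_h$, not $u_h'$, appears. Then Lemma \ref{lem:Qh} with $p=1$ still supplies the full $h^2$, $\|\nabla u_h(s)\|\le c$ for $\nu=1$, and the stronger kernel singularity $(t-s)^{(\alpha-3)/2}$ coming from the extra time derivative in \eqref{Lbb} ($m=1$) is harmless because $t-s\ge t/2$ on that range. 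With this replacement for your inner-piece estimate, the endpoint bound $\|\xi(t)\|\le ch^2t^{-(1-\alpha)/2}$ is recovered and the rest of your argument goes through.
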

\begin{proof} Recall that $\bar{e}(t)=\xi(t)+e_h(t)$. In Theorem \ref{thm:semi},  a bound for $e_h(t)$ is given. To estimate  $\xi(t)$, we modify the arguments presented in \cite{CLT-2012} for the parabolic case. We shall  consider the cases $\nu=2$ and $\nu=1$ separately.
For $\nu =2$, we use \eqref{Lbb} with $p=2,\,  q=1$, the Lipschitz continuity of $f$ and Lemma \ref{lem:Qh} to get
\begin{eqnarray*}
\Vert \xi(t)\Vert & \leq & \int_0^t \left[\Vert \bar E_h(t-s)\bar{A}_hQ_h u_{h}'(s)\Vert + \Vert \bar E_h(t-s)\bar{P}_h(f(\bar{u}_h(s))-f(u_h(s)))\Vert\right] \,ds\\
        & \leq &c \int_0^t \left[(t-s)^{(\alpha-1)/2}\Vert \nabla Q_h u_{h}'(s)\Vert + \Vert \xi(s)\Vert\right] \,ds\\
        & \leq & c\int_0^t \left[h^2(t-s)^{(\alpha-1)/2}\Vert \nabla u_{h}'(s)\Vert + \Vert \xi(s)\Vert\right] \,ds.\\        
\end{eqnarray*}  
Note that $\Vert \nabla  u_{h}'(t)\Vert\leq ct^{-(\alpha+1)/2}$ by Theorem \ref{T-1}. Therefore 
\begin{equation*}
\Vert \xi(t)\Vert \leq  c\int_0^t \left[h^2(t-s)^{(\alpha-1)/2}s^{-(\alpha+1)/2} + \Vert \xi(s)\Vert\right] \,ds\leq  ch^2,
\end{equation*}
where the last inequality follows by applying Lemma \ref{Gronwall}. Again, using \eqref{Lbb} with $p=1,\,  q=0$, the Lipschitz continuity of $f$ and Lemma \ref{lem:Qh}, we find that
\begin{eqnarray*}
\Vert\nabla \xi(t)\Vert & \leq & \int_0^t \left[\Vert\nabla \bar E_h(t-s)\bar{A}_hQ_h u_{h}'(s)\Vert + \Vert \nabla \bar E_h(t-s)\bar{P}_h(f(\bar{u}_h(s))-f(u_h(s)))\Vert\right] \,ds\\
        & \leq &c \int_0^t \left[(t-s)^{(\alpha-1)/2}\Vert \bar{A}_h Q_h u_{h}'(s)\Vert + (t-s)^{(\alpha-1)/2}\Vert \xi(s)\Vert\right] \,ds\\
        & \leq & c\int_0^t \left[h(t-s)^{(\alpha-1)/2}\Vert \nabla u_{h}'(s)\Vert + (t-s)^{(\alpha-1)/2}\Vert \xi(s)\Vert\right] \,ds     \\
         & \leq & c\int_0^t \left[h(t-s)^{(\alpha-1)/2}s^{-(\alpha+1)/2} + (t-s)^{(\alpha-1)/2}\Vert \xi(s)\Vert\right] \,ds,   
\end{eqnarray*}
and therefore $\Vert\nabla \xi(t)\Vert \leq ch$ by  Lemma \ref{Gronwall}. Hence, we have
\begin{equation}\label{q2}
\Vert \xi(t)\Vert + h \Vert\nabla \xi(t)\Vert\leq  ch^2.
\end{equation}

For $\nu = 1$, we split the integral in \eqref{LMS} as 
$$ \int_0^t \bar E_h(t-s)\bar{A}_hQ_h u_{h}'(s)  \,ds= \left\lbrace \int_0^{t/2}+ \int_{t/2}^t\right\rbrace  \bar E_h(t-s)\bar{A}_hQ_h u_{h}'(s) \,ds =:I_1+I_2.      
$$
To bound $I_1$, we integrate by parts so that
\begin{eqnarray*}
I_1 & =& \int_0^{t/2} \bar E_h(t-s)\bar{A}_hQ_h u_{h}'(s) \,ds    \\
  %  &= &  K_h(t-s)\bar{A}_hQ_h u_{h}(s)\vert_0^{t/2} -\int_0^{t/2} \bar E_h^{'}(t-s)\bar{A}_hQ_h u_{h}(s) \,ds \\
    &=& \bar E_h(t/2)\bar{A}_hQ_h u_{h}(t/2)-\bar E_h(t)\bar{A}_hQ_h u_{h}(0)+ \int_0^{t/2} \bar E_h^{'}(t-s)\bar{A}_hQ_h u_{h}(s) \,ds. 
\end{eqnarray*}
By \eqref{Lbb} and  Lemma \ref{lem:Qh}, it follows that
\begin{eqnarray*}
 \Vert I_1\Vert &\leq & ct^{(\alpha-1)/2}\Vert \nabla Q_h u_{h}(t/2)\Vert +c \int_0^{t/2} (t-s)^{(\alpha-3)/2}\Vert \nabla Q_h u_{h}(s)\Vert \, ds \\
        &\leq &ch^2 t^{(\alpha-1)/2}\Vert \nabla  u_{h}(t/2)\Vert +c h^2 \int_0^{t/2} (t-s)^{(\alpha-3)/2}\Vert \nabla   u_{h}(s)\Vert \, ds\\
 %       &\leq &ch^2 t^{(\alpha-1)/2} +c h^2 \int_0^{t/2} (t-s)^{(\alpha-3)/2}\, ds\\
        &\leq &ch^2 t^{(\alpha-1)/2}.  
\end{eqnarray*}
For $I_2$,  we apply \eqref{Lbb} with $p=2,\,  q=1$  and Lemma \ref{lem:Qh} to get
\begin{eqnarray*}
\Vert  I_2 \Vert & =& \Vert\  \int_{t/2}^t \bar E_h(t-s)\bar{A}_hQ_h u_{h}'(s) \,ds \Vert\ \\
    &\leq &c h^2 \int_{t/2}^t (t-s)^{(\alpha-1)/2}\Vert \nabla u_{h}'(s)\Vert  \,ds   \\
   &\leq &  c h^2 \int_{t/2}^t (t-s)^{(\alpha-1)/2}s^{-1}\, ds\\
   &\leq &  ch^2t^{(\alpha-1)/2}.
\end{eqnarray*}
From \eqref{LMS}, we thus deduce that 
\begin{eqnarray*}
\Vert \xi(t)\Vert  & \leq & ch^2 t^{(\alpha-1)/2} + c\int_0^t  \Vert \xi(s)\Vert \,ds.
\end{eqnarray*}
Then an application of Lemma \ref{Gronwall} yields
$
\Vert \xi(t)\Vert   \leq   c h^2 t^{(\alpha-1)/2}.    
$
For the $H^1(\Omega)$-estimate of $\xi$, we  follow previous arguments,   apply \eqref{Lbb} with $p=1,\,  q=0$  and use Lemma \ref{lem:Qh} to conclude that  $ \Vert \nabla\xi(t)\Vert   \leq   c h t^{(\alpha-1)/2}.  $ 
Hence,  for $\nu=1$,
\begin{equation}\label{q1}
\Vert \xi(t)\Vert  +h  \Vert \nabla\xi(t)\Vert \leq   c h^2 t^{(\alpha-1)/2}.    
\end{equation}
By interpolation of \eqref{q2} and \eqref{q1}, we obtain 
$$\Vert \xi(t)\Vert  +h\Vert \nabla\xi(t)\Vert \leq  ch^2 t^{-(1-\alpha)(1-\nu/2)},\quad \nu \in [1,2].$$
Together with the estimate \eqref{01-bb}, this completes the proof of \eqref{0-Lbb}.
\end{proof}

In the next theorem, a nonsmooth  data error estimate is derived. The proof is quite similar to the previous one and hence omitted.
\begin{theorem}\label{nonsmooth} Let $u$ be the solution of \eqref{main}  with $u_0\in L^2(\Omega)$. 
Let $\bar{u}_h$ be the solution of \eqref{Lsemi-2}.  
Then
%, there is a constant $c=c(\kappa,L,T)$, where $\kappa\geq\|u_0\|_{\dot H^\nu(\Omega)}+\|f(0)\|$, such that 
\begin{equation} \label{01-Lbb}
\|\bar{e}_h(t)\|+h\|\nabla \bar{e}_h(t)\|\leq ch t^{(\alpha-1)} \quad t\in (0,T].
 \end{equation}
 Furthermore, if the quadrature error operator $Q_h$ satisfies 
\begin{equation}\label{sym}
\|Q_h\chi\|\leq ch^2 \|\chi\| \quad \forall \chi\in V_h,
\end{equation}
then the following optimal error estimate holds:
\begin{equation}\label{m12b}
\|\bar{e}_h(t)\| \leq ch^2t^{(\alpha-1)}. 
\end{equation}
\end{theorem}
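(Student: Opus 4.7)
The plan is to run the same decomposition and integral representation as in the smooth-data case, but carefully track where the lower regularity $u_0\in L^2(\Omega)$ degrades the estimates. First, I would write $\bar e_h(t) = \xi(t) + e_h(t)$ with $\xi = \bar u_h - u_h$. Theorem \ref{thm:semi} with $\nu = 0$ immediately gives $\|e_h(t)\| + h\|\nabla e_h(t)\| \le c h^2 t^{\alpha-1}$, so the entire burden is on the lumped-mass consistency term $\xi$, which is given by \eqref{LMS}. The nonlinear contribution to $\xi$ is bounded by $c\int_0^t \|\xi(s)\|\,ds$ via Lipschitz continuity and the uniform bound on $\bar E_h(t)\bar P_h$, so the heart of the matter is the quadrature-error integral $J(t) := \int_0^t \bar E_h(t-s)\bar A_h Q_h u_h'(s)\,ds$.

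Under $u_0\in L^2(\Omega)$, Theorem \ref{T-1} gives $\|\nabla u(s)\|\le cs^{(\alpha-1)/2}$ and $\|\nabla u'(s)\|\le cs^{(\alpha-3)/2}$, the latter being non-integrable at $s = 0$. Following the $\nu=1$ argument already given for \eqref{0-Lbb}, I would split $J = I_1 + I_2$ at $t/2$. On $[t/2,t]$, combining \eqref{Lbb} with $p=1,q=0$, Lemma \ref{lem:Qh} with $p=1$, and the bound on $\|\nabla u'\|$ yields $\|I_2\|\le ch^2 t^{\alpha-1}$, since $s\sim t$ collapses the singularity. On $[0,t/2]$, integrating by parts in $s$ (as in the $\nu=1$ treatment) to trade $u_h'$ for $u_h$ produces the boundary terms $\bar E_h(t/2)\bar A_h Q_h u_h(t/2)$ and $-\bar E_h(t)\bar A_h Q_h P_h u_0$ together with a smoothed volume integral $\int_0^{t/2}\bar E_h'(t-s)\bar A_h Q_h u_h(s)\,ds$. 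Using $\|\nabla u(s)\|\le cs^{(\alpha-1)/2}$, Lemma \ref{lem:Qh} with $p=1$, and \eqref{Lbb} for $m=0,1$, both the first boundary term and the volume integral are $O(h^2 t^{\alpha-1})$.

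The remaining term $\bar E_h(t)\bar A_h Q_h P_h u_0$ is the critical one and is the origin of the $O(h)$ rate in \eqref{01-Lbb}. Since $\bar E_h$ and $\bar A_h$ commute, \eqref{Lbb} with $p=2,q=0$ gives $\|\bar A_h \bar E_h(t)\|\le ct^{\alpha-1}$, and the only available control on $\|Q_h P_h u_0\|$ without extra hypotheses is $\|Q_h P_h u_0\|\le c\|\nabla Q_h P_h u_0\|\le ch\|u_0\|$ from Lemma \ref{lem:Qh} with $p=0$ and Poincar\'e on $V_h$; this yields $\|\bar E_h(t)\bar A_h Q_h P_h u_0\|\le cht^{\alpha-1}$. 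Combining the pieces, $\|\xi(t)\|\le cht^{\alpha-1}+c\int_0^t\|\xi(s)\|\,ds$, and Lemma \ref{Gronwall} delivers $\|\xi(t)\|\le cht^{\alpha-1}$. An analogous $H^1$-bound (redistributing the powers of $\bar A_h^{1/2}$ between $\bar E_h$ and $Q_h$) completes \eqref{01-Lbb}. Under the additional hypothesis \eqref{sym}, the bottleneck term is upgraded directly to $\|\bar E_h(t)\bar A_h Q_h P_h u_0\|\le \|\bar A_h\bar E_h(t)\|\cdot\|Q_h P_h u_0\|\le ch^2 t^{\alpha-1}$; all other pieces were already $O(h^2 t^{\alpha-1})$, so Gronwall yields \eqref{m12b}.

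The principal technical obstacle is the non-integrable singularity $\|\nabla u'(s)\|\sim s^{(\alpha-3)/2}$ at $s=0$, which forces both the split at $t/2$ and the integration by parts on $[0,t/2]$. After that manoeuvre, the only piece that depends on $u_0$ directly (rather than on $u_h$ at positive times, where smoothing has already kicked in) is $\bar E_h(t)\bar A_h Q_h P_h u_0$, and the size of $\|Q_h P_h u_0\|$---$O(h)$ in general, $O(h^2)$ under \eqref{sym}---is precisely what separates the two assertions of the theorem.
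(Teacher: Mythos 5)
Your proposal is correct and is essentially the argument the paper intends: the authors omit the proof of this theorem, stating it is ``quite similar to the previous one,'' and your adaptation of that proof to $\nu=0$ (splitting the quadrature-error integral at $t/2$, integrating by parts on $[0,t/2]$, identifying the initial-data term $\bar E_h(t)\bar A_h Q_h P_h u_0$ as the sole $O(h)$ bottleneck via $\|\nabla Q_h P_h u_0\|\le ch\|u_0\|$, and upgrading it to $O(h^2)$ under \eqref{sym}, with Theorem \ref{thm:semi} at $\nu=0$ handling $e_h$ and Lemma \ref{Gronwall} closing the estimate) is exactly that adaptation. The only nit is the index labeling in \eqref{Lbb} for the $I_2$ bound ($p=1,\,q=0$ versus the paper's $p=2,\,q=1$), which after commuting $\bar A_h^{1/2}$ gives the same factor $(t-s)^{(\alpha-1)/2}$ and does not affect the result.
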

\begin{remark}
For symmetric meshes, the operator $Q_h$ satisfies \eqref{sym}, see \cite{CLT-2012, CLT-2013}. Thus, by interpolating  \eqref{0-Lbb} and \eqref{m12b}, we get for $u_0\in \dot{H}^{\nu}(\Omega)$ and $\nu \in [0,2]$,
$$ \Vert \bar{e}_h(t)\Vert \leq ch^2 t^{-(1-\alpha)(1-\nu/2)} .$$
\end{remark}

Now we consider the lumped mass FE method combined with a time convolution quadrature  
generated by the  backward Euler method. The resulting  linearized time-stepping scheme is  defined as follows:
with $\bar{u}_h^0=P_hu_0$, find $\bar{u}^n_h\in V_h$, $n = 1, 2, \ldots,N$, such that
%time-stepping scheme: for the given initial
%value $u_h^0=P_hu_0$ , find $u^n_h$, $n = 1, 2, \ldots ,N$, such that
\begin{equation} \label{Lfully-2}
\bar{u}^n_h -\bar{u}^0_h +(\partial_\tau^{-1}+\gamma \partial_\tau^{\alpha-1} )\bar{A}_h \bar{u}^n_h=\partial_\tau^{-1}\bar{P}_hf(\bar{u}_h^{n-1}).
\end{equation}
%%%%%%      Fully-Discrete  - Explicit    %%%%%%%%%%%%%%%%%%%%%%%%%%%%%%%%%%%%%%%%%%%%%%%%
Following the analysis in Section \ref{sec:TD},  we obtain the following error estimate.
\begin{theorem}\label{thm:Lfully-2} Let $u_0\in \dot H^\nu(\Omega)$, $\nu\in (0,2]$. Assume the mesh is symmetric. Then  the fully discrete scheme \eqref{Lfully-2} has a unique solution $\bar{u}_h^n\in V_h$, $0<n\leq N$,  satisfying
\begin{equation} \label{Lestimate-1c}
\|\bar{u}_h^n-u(t_n)\|\leq c t_n^{(1-\alpha)\nu/2-1}\tau+c t_n^{-(1-\alpha)(2-\nu)/2}h^2,\quad 0<t_n\leq T,
\end{equation} 
where the constant $c=c(\alpha,\nu,T)$  is independent of $\tau$.
%where $c$ depends  on $\alpha$, $\nu$ and $T$ but is independent of $\tau$.
\end{theorem}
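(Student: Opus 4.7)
The plan is to mirror the argument used for Theorem \ref{thm:fully-2}, with the operators $A_h, P_h, E_h$ replaced by their lumped-mass counterparts $\bar A_h, \bar P_h, \bar E_h$. First I introduce the intermediate fully discrete solution $\bar v_h^n\in V_h$ defined by
\begin{equation*}
\partial_\tau \bar v_h^n+(1+\gamma\partial_\tau^{\alpha})\bar A_h\bar v_h^n=\bar P_h f(u(t_n)),\quad n\geq 1,\quad \bar v_h^0=\bar u_h^0,
\end{equation*}
together with its continuous-in-time analog $\bar v_h(t)$ solving the lumped-mass semidiscrete equation with the frozen source $\bar P_h f(u(t))$. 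Since $K(z)=z^{-1}(I+(z^{-1}+\gamma z^{\alpha-1})\bar A_h)^{-1}$ still satisfies $\|K(z)\|\le c|z|^{-1}$ on $\Sigma_\theta$, Lemma 3.1 of \cite{Lubich-2006} delivers operators $\bar P_n,\bar R_n:V_h\to V_h$ with the uniform bound $\|\bar R_n\|\le B$ such that
\begin{equation*}
\bar u_h^n=\bar P_n \bar u_h^0+\tau\sum_{j=1}^n\bar R_{n-j}\bar P_h f(\bar u_h^{j-1}),\qquad \bar v_h^n=\bar P_n \bar u_h^0+\tau\sum_{j=1}^n\bar R_{n-j}\bar P_h f(u(t_{j-1})).
\end{equation*}
Existence and uniqueness of $\bar u_h^n$ follow as in Theorem \ref{thm:fully-2}, since each step reduces to a symmetric positive definite linear system on $V_h$.

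The key preliminary estimate I need is the lumped-mass analog of \eqref{vv-1}, namely
\begin{equation*}
\|u(t_n)-\bar v_h^n\|\le ct_n^{(1-\alpha)\nu/2-1}\tau+ct_n^{-(1-\alpha)(2-\nu)/2}h^2.
\end{equation*}
The temporal part is inherited from the quadrature analysis in \cite[Thm.~3.6]{JLZ2016} applied with $G(z)=\frac{g(z)}{z}(g(z)I+\bar A_h)^{-1}$ and the regularity $\|u'(s)\|\le cs^{(1-\alpha)\nu/2-1}$ from Theorem \ref{T-1}; the spatial part is exactly the nonsmooth-to-smooth error bound for the lumped-mass semidiscretization with inhomogeneous right-hand side, which under the symmetric mesh assumption produces the $h^2$ rate via \eqref{sym} by an interpolation between Theorem \ref{nonsmooth} and the smooth case $\nu\in[1,2]$ already proved.

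With this in hand I split
\begin{equation*}
\bar u_h^n-u(t_n)=(\bar v_h^n-u(t_n))+\tau\sum_{j=1}^n\bar R_{n-j}\bar P_h\bigl(f(\bar u_h^{j-1})-f(u(t_{j-1}))\bigr)+\tau\sum_{j=1}^n\bar R_{n-j}\bar P_h\bigl(f(u(t_{j-1}))-f(u(t_j))\bigr)-\tau\bar R_n\bar P_h f(u(t_0)),
\end{equation*}
and estimate term by term exactly as in the proof of Theorem \ref{thm:fully-2}: the first piece is controlled by the lemma above; the second by Lipschitz continuity of $f$, the bound $\|\bar R_n\|\le B$ and $\|\bar P_h\|\le c$, yielding $LB\tau\sum_{j=0}^{n-1}\|\bar u_h^j-u(t_j)\|$; the third and fourth by $\|u'(s)\|\le cs^{(1-\alpha)\nu/2-1}$ and $\|\bar R_n\|\le B$, giving $ct_n^{(1-\alpha)\nu/2}\tau+c\tau$. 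Summing and invoking the discrete Gr\"onwall inequality yields \eqref{Lestimate-1c}.

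The main obstacle is establishing the spatial $h^2$ rate inside the preliminary lemma for the inhomogeneous lumped-mass problem with nonsmooth initial data; this is precisely where the symmetric mesh hypothesis enters through \eqref{sym}, since without it the lumped-mass method only yields $h$ instead of $h^2$ in Theorem \ref{nonsmooth}. Once that ingredient is in place, the remainder of the argument is a direct translation of the standard Galerkin case and no genuinely new difficulty arises.
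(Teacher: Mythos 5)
Your proposal is correct and follows essentially the route the paper intends: the paper omits the proof of Theorem \ref{thm:Lfully-2}, stating only that it follows the analysis of Section \ref{sec:TD}, and your argument is exactly that translation, with the lumped-mass analog of \eqref{vv-1} supplying the spatial $h^2$ rate through the symmetric-mesh bound \eqref{sym} together with Theorem \ref{nonsmooth} and interpolation, and the discrete Gr\"onwall step handled as in Theorem \ref{thm:fully-2}. Your identification of the symmetric-mesh hypothesis as the point where the lumped-mass case genuinely differs from the standard Galerkin case matches the paper's structure.
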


%%%%%%%%%%%%%%%%%%%%%%%%%%%%%%%%%%%%%%%%%%%%%%%%%%%%%%%%%%%%%%%%%%%%%%
%%%%%%%%%%%%%%%%%%%%%%%%%%%%%%%%%%%%%%%%%%%%%

\section{ Numerical Experiments}  \label{sec:NE}
\se
In this section,  numerical  examples are provided to validate the theoretical results.  
We choose $\Omega=(0,1)^2$, fix $T=1$ and consider problems with smooth and nonsmooth initial data. 
%We divide the domain $\Omega$ into regular right triangles with $M$ equals subintervals 
%We compute the numerical solution using the lumped mass FE in space and the backward Euler %convolution quadrature method in time.  
We let $N$ denote the number of time steps and $\tau=T/N$.  Since exact solutions are difficult to obtain, we compute  reference solutions on  very refined meshes.

%Here, we use the notation $\chi_s$ for the characteristic function of the set $S$. Since the exact %solution is difficult to obtain, we compute a reference solution on a very refined mesh.
%and we consider examples with smooth and nonsmooth initial data $u_0$ and with nonlinear source term $f(u)$. %For one-dimensional examples we take $\Omega=(0,1)$ and for two-dimensional examples we take $\Omega=(0,1)^2$. 

%In all experiments, the computations are carried out at a fixed final time $T=1$ and we compute the %numerical solution using lumped mass FE in space and the backward Euler convolution quadrature method %in time. The number of time steps is $N$ and the time step size is $\tau=T/N$. Here, we use the %notation $\chi_s$ for the characteristic function of the set $S$. Since the exact solution is %difficult to obtain, we compute a reference solution on a very refined mesh.

We shall apply the linearized time-stepping scheme \eqref{Lfully-2} 
and  perform the  computation on symmetric and nonsymmetric triangular meshes. For the symmetric meshes,  we divide the domain $\Omega$ into regular right triangles with $M$ equal subintervals of length $h=1/M$ on each side of the domain. The nonsymmetric meshes are constructed by choosing $M$ subintervals  of lengths $4/3M$ and $2/3M$ in the $x$-direction, \textcolor{black}{distributed} such that they form an alternating series, while the $y$-direction is divided into $3M/4$ equally spaced subintervals with the assumption that $M$ is divisible by 4. 
%%%%%%%%%%%%%%%%%%%%%%%%%%%%%%%%%%%%%%%%
\begin{table}[ht]
\begin{center} 
\caption{$L^2$-error for cases (a) and (b); spatial error with $N=500$.}
\label{table:d2-a}
\begin{tabular}{|c|c|ccccc|c|}
\hline
$\alpha$ & case$\backslash M$  &  8 & 16 & 32 & 64 & 128 & rate\\
\hline
 & (a)  & 1.03e-3    & 2.64e-4          & 6.63e-5     &  1.65e-5        & 4.06e-6        & $ 2.03$   \\ 
0.25 & (b) & 1.03e-3            & 2.62e-4      & 6.57e-5       & 1.64e-5    & 4.03e-6     &  $ 2.02$ \\ 
  
 \hline
 & (a)   & 1.10e-3     &  2.81e-4       & 7.06e-5       & 1.76e-5     &  4.32e-6     &  $ 2.03$ \\
0.5& (b) &  1.09e-3    & 2.77e-4      & 6.95e-5      & 1.73e-5     &  4.29e-6       &  $ 2.02$ \\
  
\hline
 & (a)  & 1.16e-3     &  2.97e-4    & 7.47e-5     & 1.86e-5     &  4.57e-6     & $ 2.03$  \\  
0.75& (b) &  1.16e-3 &  2.93e-4    & 7.32e-5    &  1.83e-5    & 4.54e-6    &  $ 2.01$  \\
\hline
\end{tabular}
\end{center}
\end{table}
%%%%%%%%%%%%%%%%%%%%%%%%%%%%%%%%%%%%%%%%%%%%%
\begin{table}[ht]
\begin{center} 
\caption{$L^2$-error for cases (a) and (b); temporal error with $h=1/512$.}
\label{table:d2-b}
\begin{tabular}{|c|c|ccccc|c|}
\hline
$\alpha$ & case$\backslash N$  &  5 & 10 & 20 & 40 & 80 & rate\\
\hline
 & (a)  & 2.72e-4     & 1.33e-4    & 6.50e-5    &  3.10e-5      &1.42e-5   & $ 1.13$  \\ 
0.25 & (b) & 3.01e-4      & 1.19e-4    &  5.21e-5    & 2.35e-5  & 1.04e-5    &  $ 1.18$  \\ 
  
 \hline
 & (a)  &5.80e-4   &  2.88e-4    &  1.41e-4    & 6.75e-5   &  3.08e-5    & $ 1.13$  \\  
0.5& (b) & 5.43e-4    & 2.28e-4    & 1.03e-4    &  4.74e-5    &  2.12e-5    &  $ 1.16$ \\
\hline
 & (a)  & 9.39e-4   & 4.75e-4     & 2.35e-4    & 1.13e-4     &  5.18e-5    & $ 1.13$  \\  
0.75& (b) & 6.44e-4   &  2.91e-4    & 1.36e-4    &  6.39e-5     &  2.89e-5     &  $ 1.15$ \\
\hline
\end{tabular}
\end{center}
\end{table}
%%%%%%%%%%%%%%%%%%%%%%%%%%%%%%%%

 We consider the model \eqref{main} with the following data:
\begin{itemize}
\item[ (a)]  $u_0(x,y)=xy(1-x)(1-y)\in \dot H^{2}(\Omega)$ and $f=\sqrt{1+u^2}$,  
\item[ (b)] $u_0(x,y)=\chi_{(0,1/2]\times(0,1)}(x,y)\in \dot H^{\epsilon}(\Omega)$ for $0\le \epsilon<1/2$, 
and $f=\sqrt{1+u^2}$,
%\item[ (e)]  $u_0(x,y)=\sin(2\pi x)\sin(2\pi y)\in \dot H^{2}(\Omega)$ and $f=1-u^3$;  
%\item[ (f)] $u_0(x,y)=\chi_{(0,1/2]\times(0,1)}(x,y)$ and $f=1-u^3$. 
\end{itemize}
where $\chi_S$ denotes the characteristic function of the set $S$.

The numerical results  on symmetric meshes are presented in Tables \ref{table:d2-a}-\ref{table:d2-d}. In Tables \ref{table:d2-a} and \ref{table:d2-b}, we investigate the spatial and temporal convergence rates, respectively.  From the tables, we observe an $O(h^2)$ rate in space and $O(\tau)$ rate in time which agrees well with our theoretical estimates. 

Table \ref{table:d2-c} displays the space prefactor convergence rates with respect to $t$. We notice that the spatial error essentially stays unchanged in the smooth case (a), whereas it behaves like $O(t^{3(\alpha-1)/4})$ in the  nonsmooth case (b). These results confirm the estimates of Theorem \ref{thm:Lfully-2}.

%#################################################

\begin{table}[!h]
\begin{center} 
\caption{$L^2$-error for cases (a) and (b)  with  $\alpha=0.5$:  $t\to 0$,  $h=1/64$, $N=500$.}
\label{table:d2-c}
\begin{tabular}{|c|ccccc|c|}
\hline
 $t_N$ &  1e-3 & 1e-4 & 1e-5 & 1e-6 & 1e-7 & rate\\
\hline
 (a)  & 8.04e-6    & 1.25e-5   & 1.52e-5   & 1.63e-5   & 1.68e-5   & -0.01 $(0)$\\ 
 (b) & 1.89e-4   & 4.68e-4   & 1.12e-3  & 2.65e-3   & 6.15e-3   & -0.36 $(-0.375)$\\
% (e)  & 1.4254e-05     & 1.6427e-05   & 1.6934e-05  & 1.7080e-05   & 1.7151e-05   & 0.02 $(0)$\\ 
% (f) & 2.5318e-04     & 5.4179e-04   &  1.2126e-03   & 2.7684e-03  & 6.3648e-03   & -0.36 $(-0.375)$\\  
 \hline
%(b) & BE  & 3.55e-4 & 7.88e-4 & 1.77e-3 & 4.00e-3 & 9.12e-3 & -0.36 $(-0.375)$\\ 
%  & SBD &  4.05e-4 & 8.28e-4 & 1.80e-3 & 4.02e-3 & 9.14e-3 & -0.35 $(-0.375)$\\ 
%\hline
\end{tabular}
\end{center}
\end{table}
By neglecting the spatial error, fixing the step size $\tau=10$ and taking $t_N\rightarrow 0$, we examine the  time prefactor. Theorem \ref{thm:Lfully-2} indicates that the error behaves like $O(t_N^{(1-\alpha)\nu/2})$ for 
$u_0 \in\dot{H}^\nu\textcolor{black}{(\Omega)}$. The numerical results presented in Table \ref{table:d2-d} show  a convergence rate of order $O(t_N^{0.5})$ for smooth data and $O(t_N^{1/8})$ for nonsmooth data, which  confirms  our convergence theory.
%%%%%%%%%%%%%%%%%%%%%%%%%%%%%%%%%%%%%%%%%%%%%%%%%%%%%%%%%%%%%%%%%%
\begin{table}[h!]
\begin{center} 
\caption{$L^2$-error for cases (a) and (b)  with  $\alpha=0.5$:  $t\to 0$,  $h=1/512$, $N=10$.}
\label{table:d2-d}
\begin{tabular}{|c|ccccc|c|}
\hline
 $t_N$ &  1e-3 & 1e-4 & 1e-5 & 1e-6 & 1e-7 & rate\\
\hline
 (a)  & 2.01e-4   & 8.63e-5    & 2.92e-5    & 9.43e-6    & 3.01e-6    & 0.49 $(0.50)$\\ 
 (b) & 4.16e-3  & 3.21e-3     & 2.30e-3    & 1.73e-3     & 1.30e-3  & 0.12 $(0.125)$\\ 
% (e)  & 1.9614e-04           & 8.5824e-05    & 2.9204e-05    & 9.4309e-06   & 3.0076e-06    & 0.41 $(0.5)$\\ 
 %(f) & 4.0362e-03    &  3.1889e-03    & 2.2989e-03   &1.7290e-03    & 1.2974e-03    & 0.12 $(0.125)$\\ 
 
  \hline
\end{tabular}
\end{center}
\end{table}
%%%%%%%%%%%%%%%%%%%%%%%%%%%%%%%%%%%%%%%%%%%%%%%%%%%%%%%%%%%%%%%%%%%

For the case of nonsymmetric meshes, we focus on spatial errors. Theorem \ref{nonsmooth} suggests  convergence rates of order $O(h^2)$ for smooth initial data and, by interpolation, $O(h^{3/2})$ for $u_0 \in \dot{H}^{1/2}$. In Table \ref{table:nonsymmetry}, the spatial discretization errors for cases (a) and (b) are presented. The results show convergence rates of order $O(h^2)$ in both cases, which may be seen unexpected. In our case, the particular choice of initial data  could have a positive effect on the convergence rate. A similar fact was also observed in the case of the finite volume method       \cite{KP2018}.

%Since the initial data $u_0 \in \dot{H}^{1/2-\epsilon}$ has some smoothness, the numerical results do %not contradict our theoretical estimates.  

%#############################################
\begin{comment}
\begin{figure}[t]
\begin{center} \label{Fig:meshes}
  
   \includegraphics[width=12cm, height=3cm]{a1triang1.eps}
   $\;$
  %\includegraphics[width=7cm, height=6cm]{aa1triang.eps}
	\caption{Error plots for cases (a) and (b) with $\alpha=0.4$ and N=1000.}  
\end{center}
\end{figure}

\begin{figure}[t]
\begin{center} \label{Fig:meshes}
  
  \includegraphics[width=12cm, height=3cm]{aa1triang.eps}
	\caption{Error plots for cases (a) and (b) with $\alpha=0.8$ and N=1000.}  
\end{center}
\end{figure} 
\end{comment}
%#################################################

\begin{table}[ht]
\begin{center} 
\caption{$L^2$-error for cases (a) and (b)  on nonsymmetric meshes with  $\alpha=0.5$, $N=500$.}
\label{table:nonsymmetry}
\begin{tabular}{|c|ccccc|c|}
\hline
 case$\backslash M$  &  8 & 16 & 32 & 64 & 128 & rate\\
\hline
 (a)  & 1.70e-3      & 4.40e-4   & 1.11e-4      & 2.76e-5     & 6.64e-6       & 2.05 $(2.00)$\\ 
 (b) & 1.65e-3    & 4.20e-4      & 1.05e-4     & 2.61e-5     & 6.29e-6    & 2.05 $(1.50)$\\ 
  \hline
 
\end{tabular}
\end{center}
\end{table} 

%We note that if the nonlinear term $f$ is smooth but not globally Lipschitz continuous and problem \eqref{main} has a unique bounded solution, then the estimates in Theorem \ref{T-1} remain valid. This can be seen from the proof of Theorem \ref{T-1}. This is also observed numerically if one take $f(u)=1-u^3$. 
\textcolor{black}{
\section{Conclusion}
In this work, we have studied a semilinear time-fractional Rayleigh--Stokes problem involving a fractional derivative in time of Riemann-Liouville type. The nonlinear term satisfies a global Lipchitz condition. We discussed stability and provided regularity results for the exact solution. Two spatially semidiscrete schemes were investigated based on the standard Galerkin and lumped mass finite element methods, respectively. A fully discrete scheme was obtained  via a
convolution quadrature in time generated by the backward Euler method, and optimal error estimates were derived for smooth and nonsmooth initial data. Several numerical experiments were   carried out on symmetric and nonsymmetric triangular meshes to validate the theoretical results.}

%%%%%%%%%%%%%%%%%%%%%%%%%%%%%%%%%%%%%%%%%%%%%%%%%%%%%%%%%%%%%%%%%%%%%%%%%%%%%%
%\bibliographystyle{plain}
	

\begin{thebibliography}{99}
%%%%%%%%%%%%%%%%%%%%%%%%%%%%%%%%%%%%%%%%%%%%%%%%%%%%%%%%%%%%%%%%%%%%%%%%%%%%%%%%%%
\bibitem{MK-2018} M. Al-Maskari  and S. Karaa,  {\em Galerkin FEM for a time-fractional Oldroyd-B fluid problem,}   Adv. Comput. Math., 45  (2019), 1005--1029.

\bibitem{MK-2019} M. Al-Maskari and S. Karaa, 
{\em Numerical approximation of semilinear subdiffusion equations with nonsmooth initial data},
SIAM J. Numer. Anal., 57  (2019), 1524--1544.

\bibitem{MK-2020} M. Al-Maskari  and S. Karaa, {\em FEM for nonlinear subdiffusion equations with a
local Lipschitz condition}, submitted.


\bibitem{MK-2018-b}  M. Al-Maskari and S. Karaa, {\em The lumped mass FEM for a time-fractional cable equation}, Appl. Numer. Math., 132 (2018), 73–-90.

%\bibitem{Bazhlekova-2002} E. Bazhlekova, 
%Strict $L^p$ solutions for fractional evolution equations. 
%Fract. Calc. Appl. Anal.  5 (2002), no. 4, 427-436.

%\bibitem{JLZ-2017-b} B. Jin, B. Li, and Z. Zhou. 
%Discrete maximal regularity of time-stepping schemes for
%fractional evolution equations.
%Numer.  Math. 138 (2018), no. 1, 101-131.






%\bibitem{TW95} ---------------------------------------------------
 
\bibitem{Amann} H. Amann, {\em Existence and stability of solutions for semi-linear parabolic systems  and applications
to some diffusion reaction equations}, Proc. Roy. Soc. Edinburgh Sect. A,   81 (1978),  35--47.

%\bibitem{Amblard}\textcolor{blue}{ F. Amblard, ,  A. C. Maggs, B. Yurke, A. N. Pargellis and S. Leibler,  Subdiffusion and anomalous local viscoelasticity in actin networks, Phys rev. lett., 21(1996), 4470.‏}

%\bibitem {B6} E. Bazhlekova,   Subordination principle for a class of fractional order differential equations, {\em Mathematics}, 3 (2015), 412--427.‏‏ 


\bibitem{EJLZ2016}  E. Bazhlekova, B. Jin, R. Lazarov and Z. Zhou,
{\em An analysis of the Rayleigh-Stokes problem for a generalized second-grade fluid},
 Numer. Math., 131 (2016),  1--31.

%\bibitem{Bouch} \textcolor{blue}{Bouchaud, J. Philippe  and A. Georges, Anomalous diffusion in disordered media: statistical mechanisms, models and physical applications, Phys. rep., 195 (1990), 127--293.‏}

%\bibitem{CTW-1992} C. Chen, V. Thom\'ee, and L. B. Wahlbin. 
%Finite element approximation of a parabolic integro-differential equation with a weakly singular kernel.
%Math. Comp., 58(198):587–602, 1992.
\bibitem{CLT-2012} {P. Chatzipantelidis, R. D. Lazarov and V. Thom\'ee},
{\em Some error estimates for the lumped mass finite element method for a parabolic problem,} 
Math. Comp., 81 (2012), 1–-20. 

\bibitem{CLT-2013} {P. Chatzipantelidis, R. D. Lazarov and V. Thom\'ee},
{\em Some error estimates for the finite volume element method for a parabolic problem,}  
Comput. Meth. Appl. Math.,  13  (2013), 251–-279. 

\bibitem{stokes1}   C. M. Chen,  F. Liu, and V. Anh,  {\em  Numerical analysis of the Rayleigh-Stokes problem for a heated generalized second grade fluid with fractional derivatives}, App. Math. and Comp., 204 (2008), 340--351.

\bibitem{stokes2} C.  M. Chen, F. Liu and V. Anh, {\em  A Fourier method and an extrapolation technique for Stokes’ first problem for a heated generalized second grade fluid with fractional derivative,} J. Comp.  App. Math, 223 (2009), 777--789.

%\bibitem{Ciarlet-2002} P. G. Ciarlet, 
%The finite element method for elliptic problems, SIAM Philadelphia, PA, 2002.

%\bibitem{CT-1987} M. Crouzeix and V. Thom\'ee, 
%On the discretization in time of semilinear parabolic equations with nonsmooth initial data. 
%Math. Comp.  49  (1987),  no. 180, 359-377.

 
\bibitem{Lubich-2006}  E. Cuesta, C. Lubich and C. Palencia, 
{\em Convolution quadrature time discretization of fractional diffusion-wave equations,} 
Math. Comp., 75 (2006),  673--696.

%\bibitem{B3} M. Dehghan and M. Abbaszadeh,  A finite element method for the numerical solution of %Rayleigh-Stokes problem for a heated generalized second grade fluid with fractional derivatives, {\em %Eng. Comput.}, 33 (2017) , 587--605.

%\bibitem{DM-1986} J.  Dixon, J. and S. McKee, 
%S. Weakly singular discrete Gronwall inequalities. 
%{\em Z. Angew. Math. Mech.}, 66 (1986), no. 11, 535–544.

%\bibitem{FS-2002} H. Fujita and T. Suzuki,  
%{\em Evolution problems. In: Handbook of Numerical Analysis,} 
%vol. II, Handb. Numer. Anal., II, pp. 789--928. North-Holland, Amsterdam (1991)




%\bibitem{LHW-2008a}
%T. Langlands, B. Henry, S. Wearne, Anomalous subdiffusion with multispecies linear reaction dynamics, 
%Phys. Rev. E 77 (2008) 021111.

%\bibitem{LHW-2008b} B. I. Henry, T. A. M. Langlands, and S. L. Wearne, Fractional cable models for spiny neuronal
%dendrites, Phys. Rev. Lett., 100(12):128103, 2008.

%\bibitem{LLX-2011} Lin,Y.M., Li, X.J., Xu, C.J.: Finite difference/spectral approximations for the %fractional cable equation.
%Math. Comput. 80(275), 1369–1396 (2011)

%\bibitem{QY-2011} Quintana-Murillo, J., Yuste, S.B.: An explicit numerical method for the fractional cable %equation. Int.
%J. Differ. Equ. 2011 Article ID 231920 (2011)





%\bibitem{GP-1999} C. Gonz\'alez and C. Palencia, Stability of Runge-Kutta methods for abstract time-dependent parabolic problems: the H\"oder case. Math. Comp. 68 (1999) 73-89.



%\bibitem{GMMP} { R. Gorenflo, F.Mainardi, D.Moretti and P. Paradisi}, Time fractional diffusion:
%a discrete random walk approach, {\em Nonlinear Dynamics}, 29 (2002), 129--143.




%\bibitem{HW}  {B. I. Henry and S. L. Wearne}, Fractional reaction-diffusion, {\em Physica A}, 276
%(2000), 448--455.


%\bibitem{JLTW-1987} C. Johnson, S. Larsson, V. Thom\'ee and L. B. Wahlbin,
%Error estimates for spatially discrete approximations of semilinear parabolic equations with nonsmooth initial data. 
%Math. Comp. 49 (1987),  no. 180, 331-357.





\bibitem{JLZ2016}  B. Jin, R. Lazarov and Z. Zhou,
{\em Two fully discrete schemes for fractional diffusion and diffusion-wave equations with nonsmooth data,}
SIAM J. Sci. Comput.,  38  (2016), 146--170.

%\bibitem{JLZ2018} \textcolor{black}{ B. Jin, R. Lazarov and Z. Zhou,  Numerical methods for time-fractional evolution equations with nonsmooth data: a concise overview, (2018)arXiv preprint arXiv:1805.11309.‏ISO 690.	}

%\bibitem{JLZ2013}  B. Jin, R. Lazarov and Z. Zhou,
%Error estimates for a semidiscrete finite element method for fractional order parabolic equations,
%{\em SIAM J. Numer. Anal.},  51  (2013),  445-–466.

%\bibitem{JLPZ2015}  B. Jin, R. Lazarov, J. Pascal and Z. Zhou,
%Error analysis of semidiscrete finite element methods for inhomogeneous time-fractional diffusion,
%{\em IMA J. Numer. Anal.},  35  (2015),  561–-582.

\bibitem{JLZ-2018}  B. Jin, B. Li and Z. Zhou,
{\em Numerical Analysis of nonlinear subdiffusion equations},  
SIAM J. Numer. Anal.  56 (2018), no. 1, 1--23.

%\bibitem{KMP2015}   S. Karaa, K. Mustapha and A. K. Pani,
%Finite volume element method for two-dimensional fractional subdiffusion problems, 
%{\em IMA J. Numer. Anal.}, (2016), to appear.


 
% \bibitem{KP-2017} S. Karaa and A. K. Pani, 
%Error analysis of a finite volume element method for fractional order evolution equations 
%with nonsmooth initial data,  arXiv:1702.03485. 


%\bibitem{LiChenWu2000} R. H. Li,  Z. Y. Chen and W. Wu,  
%Generalized Difference Methods for Differential Equations, Marcel Dekker, New York, 2000.

%\bibitem{KMP2016}   S. Karaa, K. Mustapha and A. K. Pani,
%{\it A priori} estimates of a finite element method for fractional diffusion problems by energy arguments,
 %arXiv:1605.09104.???????????????????????????
 
%\bibitem{KST-2006} A. A. Kilbas, H. M. Srivastava, and J. J. Trujillo. Theory and Applications of Fractional Differential Equations. Elsevier Science B.V., Amsterdam, 2006.

%\bibitem{Kumar}  D. Kumar, S. Chaudhary,  and V. V. Kumar, {\em  Fractional Crank-Nicolson-Galerkin %Finite Element Scheme for the Time-Fractional Nonlinear Diffusion Equation,} Numer. Meth. Part. DE., %35 (2019), 2056-2075



\bibitem{KP2018} S. Karaa and A. K. Pani, {\em Error analysis of a FVEM for fractional order evolution equations with nonsmooth initial data},  
ESAIM Math. Model. Numer. Anal., 52 (2018), 773–-801.
 

\bibitem{LLSWZ-2018} D. Li, H. Liao, W. Sun, J. Wang and J. Zhang,
{\em Analysis of $L^1$-Galerkin FEMs for time-fractional nonlinear parabolic problems},
Commun. Comput. Phys., 24 (2018), 86-103.

\bibitem{LWZ-2017} D. Li, J. Wang and J. Zhang,
{\em Unconditionally Convergent $L^1$-Galerkin FEMs for Nonlinear Time-Fractional Schr\"odinger Equations,} 
SIAM J. Sci. Comput. 39 (2017), A3067--A3088. 




\bibitem{stokes4}  Y. Lin and  W. Jiang, 
{\em Numerical method for Stokes' first problem for a heated generalized second grade fluid with fractional derivative,}     Numer.  Meth.  Part. D. E., 27 (2011), 1599--1609.

%\bibitem{Lubich-1986}  C. Lubich, 
%Discretized fractional calculus, 
%{\em SIAM J. Math. Anal.}, 17 (1986),  704–-719.

%\bibitem{Lubich-1988} C. Lubich, 
%{\em Convolution quadrature and discretized operational calculus-I}, 
%Numer. Math., 52 (1988),  129--145.

\bibitem{Lubich-2004} C. Lubich, 
{\em Convolution quadrature revisited}, 
BIT Numerical Mathematics,  44  (2004), 503--514.


\bibitem{LST-1996} C. Lubich, I. H. Sloan and V. Thom\'ee, 
{\em Nonsmooth data error estimates for approximations of an 
evolution equation with a positive-type memory term,} 
Math. Comp., 65 (1996),  1--17. 



\bibitem{Mclean2010} W. Mclean,
{\em Regularity of solutions to a time-fractional diffusion equation},
ANZIAM J.,  52 (2010), 123--138.



%\bibitem{MST2006} W.McLean, I. H. Sloan, and V. Thom\'ee, Time discretization via Laplace transformation 
%of an integrodifferential equation of parabolic type, {\em  Numer. Math.}
%102 (2006), 497--522.

%\bibitem{McLean-2010} W. McLean, 
%Regularity of solutions to a time-fractional diffusion equation,
%ANZIAM J., 52(2):123–138, 2010.

%\bibitem{MT2010}  W. McLean and V. Thom\'ee,
%Numerical solution via Laplace transforms of a fractional order evolution equation,
%{\em J. Integral Equations Appl.} 22 (2010),  57-–94.

%\bibitem{McLeanThomee2010} W. McLean and V. Thom\'ee, Maximum-norm error
%analysis of a numerical solution via Laplace transformation and
%quadrature of a fractional order evolution equation, 
%{\em IMA J. Numer. Anal.}, 30 (2010), 208--230.



%\bibitem{MK} {R. Metzler and J. Klafter}, The random walk’s guide to anomalous diffusion: a
%fractional dynamics approach, {\em Physics Reports}, 339 (2000), 1--77.

%\bibitem{stokes3}  A. Mohebbi, M. Abbaszadeh, and M. Dehghan,  Compact finite difference scheme and %RBF meshless approach for solving 2D Rayleigh–Stokes problem for a heated generalized second grade %fluid with fractional derivatives, {\em Comput. Methods Appl. Mech. Engrg.}, 264 (2013), 163--177.‏




%\bibitem{McLeanThomee2004} W. McLean and V. Thom\'ee,  
%Time discretization of an evolution equation via Laplace transforms, 
%{\em  IMA J. Numer. Anal.},  24  (2004), 439–-463.



%\bibitem{Mustapha2011} K. Mustapha,
%An implicit finite difference time-stepping method for a sub-diffusion
%, with spatial discretization by finite elements,
%{\em  IMA J. Numer. Anal.}, 31 (2011), 719--739.

%\bibitem{MustaphaMcLean2011} K. Mustapha and  W. McLean,
%Piecewise-linear, discontinuous Galerkin method for a fractional diffusion equation,
%{\em Numer. Algorithms}, {56} (2011), 159--184.

%\bibitem{MM-2010} K. Mustapha and H. Mustapha, 
%{\em  A second-order accurate numerical method for a semilinear integro-differential equation with a %weakly singular kernel},
%IMA J. Numer. Anal., 30 (2010), pp. 555--578.

%\bibitem{SY-2011} K. Sakamoto and M. Yamamoto, Initial value/boundary value problems for fractional
%diffusion-wave equations and applications to some inverse problems, 
%J. Math. Anal. Appl., 382 (2011), pp. 426–447.


%\bibitem{Ciarlet1978}
%\textsc{P. G. Ciarlet} (1978), The finite element method for elliptic problems, North Holland, Amsterdam.



\bibitem{thomee1997} V. Thom\'ee,
Galerkin finite element methods for parabolic problems, 
Springer-Verlag, Berlin, 2006.

%\bibitem{TW-1995} V. Thom\'ee and L.  Wahlbin, 
%On Galerkin methods in semilinear parabolic problems. 
%SIAM J. Numer. Anal.  12  (1975), 378-389. 
 
%\bibitem{Yuste} \textcolor{blue}{S. B. Yuste, L. Acedo and K. Lindenberg, Reaction front in A + B $\rightarrow$ C
%reaction-subdiffusion process, Phys. Rev. E 69, 036126 (2004).} 
\bibitem {23} C. Wu,  {\em Numerical solution for Stokes' first problem for a heated generalized second grade fluid with fractional derivative,} Appl. Numer. Math., 59   (2009), 2571--2583.

\bibitem{Zaky} M.  A.  Zaky,
{\em An improved tau method for the multi-dimensional fractional Rayleigh--Stokes problem for a heated generalized second grade fluid},  Comput. Math. Appl., 75 (2018),  2243 -- 2258.

\end{thebibliography}
\end{document}